\documentclass{amsart}

\usepackage{fmmathmacros}

\usepackage{amsmath,amssymb}
\usepackage{tikz}

\usepackage[ruled,vlined]{algorithm2e}
\SetKwProg{Function}{Function}{}{}
\SetKwInOut{Input}{Input}
\SetKwInOut{Output}{Output}

\usepackage{hyperref}

\newcommand{\K}{\mathbf{K}}
\newcommand{\Norm}{\mathrm{Norm}}
\newcommand{\disc}{\mathrm{disc}}
\newcommand{\Covering}{\sc{Covering}}

\newtheorem{lemma}{Lemma}[section]
\newtheorem{theorem}[lemma]{Theorem}
\newtheorem{proposition}[lemma]{Proposition}

\theoremstyle{definition}
\newtheorem{definition}[lemma]{Definition}
\theoremstyle{remark}
\newtheorem{remark}{Remark}[section]

\begin{document}

\title{Division algorithms for norm-Euclidean real quadratic fields --
part I}
\author{Fran\c{c}ois MORAIN}
\thanks{LIX, CNRS, INRIA, \'Ecole Polytechnique, Institut
Polytechnique de Paris, Palaiseau, France}
\email{morain@lix.polytechnique.fr}

\date{\today}

\begin{abstract}
We give a Euclidean division algorithm for the real quadratic
fields $\Q(\sqrt{m})$ for $m \in \{2, 3, 6, 7, 11, 19\}$,
with the property that the norm of the remainder depends on the first
Euclidean minimum of the field.
In each case, we cover the square $[-1/2, 1/2] \times [-1/2, 1/2]$ with
hyperbolas and give a list of these, together with regions covered.
We mechanize the proofs as much as we can, using exact
computations, in order to be able to reproduce them.
\end{abstract}

\maketitle

\section{Introduction}

Let $\K/\Q = \K(\theta)$ be a degree $d$ number field with defining
polynomial $f(X)$, and discriminant $D_K$, $\Norm(x)$ is the norm of
$x \in \K$; we denote by $\OK$ the ring of integers of $\K$.

The {\em Euclidean minimum} of $\xi \in \K$ is given by
$$m_\K(\xi) = \mathrm{inf}\{\Norm(\xi - \gamma), \gamma \in \OK\}.$$
The {\em Euclidean minimum} of $\K$ is
$$M_1(\K) = \mathrm{sup}\{m_\K(\xi), \xi \in \K\}.$$
A {\em critical point} (if it exists) $\xi$ is such that $m_\K(\xi) =
M_1(\K)$. See \cite{Cerri2007,Lezowski2014} for
theoretical results as well as efficient algorithms to compute the
euclidean minimum for fields of rather large degree.
It is clear that $M_1(\K) < 1$ (resp. $>1$), $\OK$ is norm-euclidean
(resp. is not). If $M_1(\K)=1$, it depends (e.g., see~\cite{Cerri2006}).
We refer to the master piece \cite{Lemmermeyer1995} for the history of
the problem, and a quasi-complete list of the results in the field,
including several references to the work on Barnes and Swinnerton-Dyer
on this topic.

\begin{proposition}
$\OK$ is norm-Euclidean if and only if for every $\xi \in \K$,
there is $\gamma \in \OK$ s.t.
\begin{equation}\label{eucdef}
 |\Norm(\xi-\gamma)| < 1.
\end{equation}
\end{proposition}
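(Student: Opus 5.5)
The plan is to unwind the definition of norm-Euclidean: $\OK$ is norm-Euclidean if for all $\alpha,\beta \in \OK$ with $\beta \neq 0$ there exist $q, r \in \OK$ with $\alpha = q\beta + r$ and $|\Norm(r)| < |\Norm(\beta)|$. Each direction then comes from dividing by $\beta$ inside $\K$ and using multiplicativity of the norm, $\Norm(xy)=\Norm(x)\Norm(y)$ for $x,y\in\K$. The only other facts needed are that $\K$ is the fraction field of $\OK$ and that every element of $\K$ has a nonzero integer multiple lying in $\OK$.

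\emph{Condition (\ref{eucdef}) implies norm-Euclidean.} Let $\alpha, \beta \in \OK$ with $\beta \ne 0$, and put $\xi = \alpha/\beta \in \K$. Choose $\gamma \in \OK$ with $|\Norm(\xi - \gamma)| < 1$, and set $q = \gamma$ and $r = \alpha - \gamma\beta = \beta(\xi-\gamma)$. Then $r \in \OK$, and
\[
|\Norm(r)| = |\Norm(\beta)|\,|\Norm(\xi-\gamma)| < |\Norm(\beta)|,
\]
which is exactly the division with remainder required.

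\emph{Norm-Euclidean implies condition (\ref{eucdef}).} Let $\xi \in \K$ be arbitrary. Take a nonzero $d \in \Z$ (for instance a common denominator of the coordinates of $\xi$ in an integral basis) such that $\alpha := d\xi \in \OK$, and set $\beta := d$. Euclidean division gives $q, r \in \OK$ with $\alpha = q\beta + r$ and $|\Norm(r)| < |\Norm(\beta)|$. Put $\gamma = q$. Then $\xi - \gamma = r/\beta$, so
\[
|\Norm(\xi-\gamma)| = \frac{|\Norm(r)|}{|\Norm(\beta)|} < 1 .
\]

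I expect no real obstacle. The only point needing care is the existence of the denominator $d$, which is the standard fact that $\K = \mathrm{Frac}(\OK)$, with denominators that can be taken in $\Z$; clearing denominators of the rational coordinates of $\xi$ in an integral basis gives it directly.
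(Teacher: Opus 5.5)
Your proof is correct. The paper states this proposition without proof, treating it as the standard reformulation of norm-Euclideanity, and your argument is exactly that standard argument: write $\xi=\alpha/\beta$, or clear denominators with some $0\neq d\in\Z$, then use multiplicativity of $\Norm$ together with $|\Norm(\beta)|>0$ for $\beta\neq 0$.
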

Let us define classes of euclidean division algorithms: 1-division
stands for an algorithm producing $\gamma$ as in (\ref{eucdef}). This
type is not really satisfactory to get a useful bound for a gcd,
say. An $M$-division algorithm corresponds to producing $\gamma$ such
that $|\Norm(\xi-\gamma)| \leq M$ instead. As
examples, an $M_1$-division algorithm assures of a rapidly convergent gcd
sequence. Few $M_1$-algorithms are known, since the proof of
euclideanity does not always give a useful algorithm, with the
exception of some cyclotomic
fields~\cite{ScWi1995,CaSc2010,JoLaNgNa2021} (related to the finding
of explicit versions of higher reciprocity laws). In theory, if we
could have an explicit algorithm to compute $m_K(x)$, we would end up
with an {\em optimal} euclidean division algorithm.

The aim of this article is describe $M_1$-algorithms for the case
of norm-Euclidean real quadratic fields $\Q(\sqrt{m})$ with $m \not\equiv
1\bmod 4$. The case of $m \equiv 1 \bmod 4$ will be treated in
\cite{Morain2026c}. See also \cite{Morain2026a} for the case of imaginary
quadratic fields.

Section~\ref{sct-tools} describes exact computations, notably finding
the sign of an algebraic expression in certain cases, needed for our
proofs later on. This is used in Section~\ref{sct3} to deal with real
quadratic fields. The proofs are geometric in nature, covering some
square in the plane using hyperbolas defined by the norm. Note that we
did not search for a minimal number of coverings for given $m$. Our
focus is on exact computations without floating point numbers, leading
to reproducible proofs. It should be noted that our Maple program is
available on \url{https://gitlab.inria.fr/morain/euclid}, together
with the results obtained in each case in human readable format.

\section{Tools}
\label{sct-tools}

\subsection{Rounding}

We begin with an easy Lemma.
\begin{lemma}\label{half}
For all $a_0 \in \Q$, there exists $\varepsilon \in \{\pm 1\}$ and $x
\in \Z$ such that $a_0 = x + \varepsilon a$ with $0 \leq a \leq 1/2$.
\end{lemma}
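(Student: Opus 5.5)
The plan is to reduce to the fractional part of $a_0$ and then split according to whether it lies below or above $1/2$. Since $\Z$ acts on $\Q$ by translation, I set $n = \lfloor a_0 \rfloor \in \Z$. This floor exists for a rational number $a_0 = p/q$ with $q>0$ by Euclidean division of $p$ by $q$. I then put $r = a_0 - n$, so that $r \in \Q$ and $0 \leq r < 1$.

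There are two cases.
\begin{itemize}
\item If $0 \leq r \leq 1/2$, I take $x = n$, $\varepsilon = 1$ and $a = r$. Then $a_0 = x + \varepsilon a$ with $0 \leq a \leq 1/2$.
\item If $1/2 < r < 1$, I take $x = n+1$, $\varepsilon = -1$ and $a = 1 - r$. Then $x + \varepsilon a = n + 1 - (1-r) = n + r = a_0$, and $0 < a < 1/2$.
\end{itemize}
In both cases $a$ is rational, since it differs from $a_0$ by an integer. This matters for the later exact computations in the paper.

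I do not expect a real obstacle. The only point needing care is the boundary $r = 1/2$, where both choices work, so the decomposition is not unique. The statement asks only for existence, so assigning the boundary to the first case is enough. The representation is also non-unique when $a = 0$, since then $\varepsilon$ is arbitrary; again this does not affect existence. Concretely, this amounts to taking $x$ to be a nearest integer to $a_0$, that is $x = \lfloor a_0 + 1/2 \rfloor$, and letting $\varepsilon$ be the sign of $a_0 - x$, with $\varepsilon = 1$ when $a_0 = x$.
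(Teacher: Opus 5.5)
Your proof is correct and is essentially the paper's argument: the paper takes $x$ to be a nearest integer to $a_0$ and sets $\varepsilon=\mathrm{sign}(a_0-x)$, and your floor/fractional-part case split makes exactly that explicit. Your convention $\varepsilon=1$ when $a=0$ usefully patches the paper's $\mathrm{sign}(0)=0\notin\{\pm1\}$. One small nit: at the tie $r=1/2$ your case split picks $x=n$, whereas $\lfloor a_0+1/2\rfloor=n+1$, but this is harmless since both choices work.
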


\noindent
{\em Proof:} take $x = \left\lfloor a_0\right\rceil$, so that $|a_0-x|
\leq 1/2$ and $\varepsilon = sign(a_0-x)$.
$\Box$

\subsection{Computing the sign of a quadratic polynomial}

\begin{lemma}\label{signquad}
Let $P(X) = \alpha X^2 + \beta X + \gamma \in \Q[X]$ with $\alpha \neq
0$. The sign of $P(x)$ for real $x \in [x_{\min}, x_{\max}]$ is:
$$\left\{
 \begin{array}{cl}
 sign(\alpha) & \text{ if }\disc(P) < 0 (\text{case } -); \\
 sign(\alpha) & \text{ if }\disc(P) = 0 \text{case 0: but for } x_0 = -\beta/(2
 \alpha)\\
 & \text{ where it is }0\text{ in case } x_0 \text{ belongs to the
 interval}; \\
 \text{see proof} & \text{ if }\disc(P) > 0.
 \end{array}
\right.$$
\end{lemma}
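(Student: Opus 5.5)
The plan is to complete the square. For $\alpha\neq 0$ write
$$P(X) = \alpha\left(\left(X + \frac{\beta}{2\alpha}\right)^2 - \frac{\disc(P)}{4\alpha^2}\right),$$
with $\disc(P) = \beta^2 - 4\alpha\gamma$. The sign of $P(x)$ is then $sign(\alpha)$ times the sign of the bracket, and the three cases of the statement follow from the sign of $\disc(P)$.

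If $\disc(P)<0$, the bracket is a square plus a strictly positive quantity. So $P(x)$ has sign $sign(\alpha)$ for every real $x$, in particular on $[x_{\min},x_{\max}]$.

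If $\disc(P)=0$, then $P(X)=\alpha(X-x_0)^2$ with $x_0=-\beta/(2\alpha)\in\Q$. Hence $P(x)$ has sign $sign(\alpha)$ for every $x\neq x_0$. It vanishes exactly at $x_0$, and this matters only when $x_{\min}\le x_0\le x_{\max}$. That check uses rational numbers only, so it is exact.

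For $\disc(P)>0$ I must still supply the description the statement defers to the proof. Here $P$ has two distinct real roots $r_\pm = (-\beta \pm \sqrt{\disc(P)})/(2\alpha)$. The sign of $P(x)$ is $sign(\alpha)$ outside $[r_-,r_+]$ (taking $r_-<r_+$ after ordering), it is $-sign(\alpha)$ strictly between the roots, and it is $0$ at the roots. To decide the sign on $[x_{\min},x_{\max}]$ one locates the roots relative to the endpoints. This is the main obstacle, because the roots are in general irrational and the paper insists on exact computation.

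I would handle this without floating point. To compare a rational $c$ with $r_\pm$, look at the sign of $P(c)$ and of $2\alpha c+\beta$, the derivative, which tells on which side of $x_0$ the point $c$ lies. If $sign(P(c)) = -sign(\alpha)$, then $c$ lies strictly between the roots. Otherwise $c$ lies on the side of both roots given by $sign(2\alpha c+\beta)$, or is a root if $P(c)=0$. Applying this to $c=x_{\min}$ and $c=x_{\max}$, and comparing $x_0$ with the endpoints, determines exactly which roots lie in the interval, using only rational arithmetic. Hence it determines the sign pattern of $P$ on $[x_{\min},x_{\max}]$. In particular, $P$ has constant sign $sign(\alpha)$ on the interval exactly when both endpoints give $P$ of sign $sign(\alpha)$ and $x_0$ is not in the interval, or both derivative signs at the endpoints agree.
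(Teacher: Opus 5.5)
Your argument is correct, and it uses a genuinely different procedure from the paper's for $\disc(P)>0$. For $\disc(P)\le 0$, your completing the square supplies the justification the paper omits; its proof only treats the last case.

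\textbf{The paper's route.} Assuming $\alpha>0$, the paper enumerates the positions of $[x_{\min},x_{\max}]$ relative to the explicit roots $x_\pm=(-\beta\pm\sqrt{\disc(P)})/(2\alpha)$. These are its cases $a_1$, $a_2$, b, d, e, whose labels are quoted throughout Section~\ref{sct3}. It then compares $x_\pm$ with the endpoints directly as exact numbers.

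\textbf{Your route.} You never form $\sqrt{\disc(P)}$. You place each endpoint $c$ relative to the roots using $\mathrm{sign}\,P(c)$ and the position of $c$ relative to the rational vertex $x_0$.

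\textbf{What each buys.} Your version removes $\sqrt{\disc(P)}$ from every comparison. For instance, the paper's case b becomes just $\alpha P(x_{\min})\le 0$ and $\alpha P(x_{\max})\le 0$, by convexity. It is also exhaustive by construction: sorting each endpoint into the five ordered positions (left of $x_-$, at $x_-$, between, at $x_+$, right of $x_+$) settles every configuration. The paper's enumeration, by contrast, lists d and e, which have only one root inside the interval, as subcases of $[x_-,x_+]\subsetneq[x_{\min},x_{\max}]$, a condition they do not satisfy. The paper's version has the merit of naming explicitly the configurations it later cites.

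\textbf{Two small corrections.}
First, ``only rational arithmetic'' requires rational endpoints, which is often not the paper's situation; an example is $x(P_8)=-1+\sqrt{217}/14$ for $m=7$. Then $P(c)$ and $c-x_0$ lie in a real quadratic field, and you still need the paper's sign test for exact numbers. You only need its one-radical case, though, rather than the two-radical comparison of $c$ with $x_\pm$. The same remark applies to your check $x_{\min}\le x_0\le x_{\max}$ when $\disc(P)=0$.
Second, the side of $x_0$ on which $c$ lies is $\mathrm{sign}(c-x_0)=\mathrm{sign}(\alpha)\,\mathrm{sign}(2\alpha c+\beta)$. Reading it off the derivative alone is backwards when $\alpha<0$. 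Comparing $c$ with $x_0$ directly, as you do at the end, avoids this.
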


\medskip
\noindent
{\em Proof:} in the last case, $P$ has two roots $x_{\pm} = (-\beta
\pm \sqrt{\disc(P)})/(2 \alpha)$. We can concentrate on the case
$\alpha > 0$, since the sign of $P(x)$ is the opposite of that $-P(x)$
in this case. There are several cases, depicted using Figure~\ref{case-ab}.

\begin{description}
 \item [case a] if $x_{\max} \leq x_{-}$ (resp. $x_{+} \leq x_{\min}$), $P(x)$
decreases (resp. increases) from $x_{\min}$ to
$x_{\max}$ and its values are positive. The former sub-case is labeled
$a_1$, the former $a_2$.

 \item [case b] $[x_{\min}, x_{\max}] \subset [x_{-}, x_{+}]$:
 $sign(P(x)) \leq 0$, see Figure~\ref{case-ab}.

 \item [case c] $[x_{-}, x_{+}] \subsetneq [x_{\min}, x_{\max}]$:
 $sign(P(x))$ is not constant on the interval:
  \begin{description}
    \item [case d] $x_{\min} < x_{-} < x_{\max} < x_{+}$: $P(x_{\min}) > 0$,
$P(x_{\max}) < 0$.

     \item [case e] $x_{-} < x_{\min} < x_{+} < x_{\max}$: $P(x_{\min}) < 0$,
$P(x_{\max}) > 0$.
  \end{description}
\end{description}

\begin{figure}[hbt]
\begin{tikzpicture}[>=latex]
\draw[domain=-2.4:1.4] plot (\x, {\x*\x+\x-1});
\draw[->] (-3, 0) -- (2, 0);
\draw[->] (0, -1.75) -- (0, 2.5);
\node at (-1.618, 0) {$\bullet$}; \node at (-1.2, 0.2) {$x_{-}$};
\node at (0.618, 0) {$\bullet$}; \node at (1, 0.2) {$x_{+}$};
\node at (-2, 1) {$\bullet$}; \node at (-2.5, 1) {$x_{\min}$};
\node at (-1.8, 0.44) {$\bullet$}; \node at (-2.3, 0.44) {$x_{\max}$};
\node at (1, 1) {$\bullet$}; \node at (1.5, 1) {$x_{\min}$};
\node at (1.25, 1.825) {$\bullet$}; \node at (1.9, 1.825) {$x_{\max}$};
\node at (0, -2) {case a)};
\end{tikzpicture}
\begin{tikzpicture}[>=latex]
\draw[domain=-2.4:1.4] plot (\x, {\x*\x+\x-1});
\draw[->] (-3, 0) -- (2, 0);
\draw[->] (0, -1.75) -- (0, 2.5);
\node at (-1.618, 0) {$\bullet$}; \node at (-1.2, 0.2) {$x_{-}$};
\node at (0.618, 0) {$\bullet$}; \node at (1, 0.2) {$x_{+}$};
\node at (-1, -1) {$\bullet$}; \node at (-1.5, -1) {$x_{\min}$};
\node at (0.2, -0.76) {$\bullet$}; \node at (0.8, -0.76) {$x_{\max}$};
\node at (0, -2) {case b)};
\end{tikzpicture}

\caption{Sign of a quadratic polynomial on $[x_{\min}, x_{\max}]$. \label{case-ab}}
\end{figure}
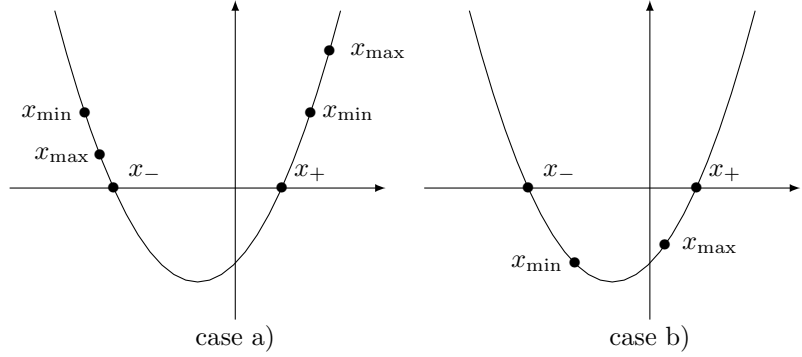

$\Box$

In our practice, the values of $x_{\pm}$, $x_{\min}$ and $x_{\max}$
are exact numbers, so that we can test for (in)equalities in an exact
way.

\subsection{Computing the sign of an exact number}

An {\em exact number} is either an integer, a rational number, a
square-root thereof, or some sum thereof. For instance
$$1, 1/2, \sqrt{2}, (1 + \sqrt{5})/2, \sqrt{1+\sqrt{2}}$$
are exact numbers. (In this work, all square-roots are supposed
positive.) Computations on these numbers are exact. For our
needs, we are be able to do sign computations, without using
floating point evaluations at all. This is the key to correct and
verifiable proofs.

\begin{proposition}
Let $t \geq 1$ be an integer.
Let $(x_i)_{0 \leq i \leq t}$ be strictly positive rational integers,
$(\epsilon_i)_{1\leq i\leq t}$ be signs $\pm 1$ and $(y_i)_{1 \leq i
\leq t}$ exact numbers. We want to determine the sign of the expression
$$x = x_0 + \sum_{i=1}^t \epsilon_i x_i \sqrt{y_i}.$$
Denote by $ABS$ the function
$$ABS(x) = x_0 + \sum_{i=1}^t x_i \sqrt{y_i}.$$

1) If $\epsilon_i = 1$ for all $i$, then $sign(x) = +1$.

2) If $t = 1$ and $\epsilon_1 = -1$, we have
$$sign(x) = sign( (x ABS(x)) = sign(x_0^2-x_1^2 y_1).$$

3) If $t=2$, then $sign(x) = sign(x ABS(x))$
and we decrease the number of radicals.

4) If $t=3$, then $sign(x) = sign(x ABS(x))$
and we decrease the number of radicals, only in the case where
$$x = x_0 + x_1 \sqrt{y_1} - x_2 \sqrt{y_2} - x_3 \sqrt{y_3}.$$
\end{proposition}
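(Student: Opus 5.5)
The whole proof rests on one observation: $ABS(x)>0$, so multiplying $x$ by $ABS(x)$ does not change its sign. Indeed $x_0>0$, every $x_i>0$, and each $\sqrt{y_i}\ge 0$ (square roots are taken positive, and the $y_i$ are implicitly nonnegative for the radicals to be real), so $ABS(x)\ge x_0>0$. This also settles item 1): when all $\epsilon_i=1$ we have $x=ABS(x)>0$. I would state this once and then treat each remaining item by computing the product $x\,ABS(x)$ and checking that it has fewer radicals than $x$. Once that is shown, the procedure can be iterated until only a rational number, or a case-1 expression, is left.

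For 2), the product is $(x_0-x_1\sqrt{y_1})(x_0+x_1\sqrt{y_1})=x_0^2-x_1^2y_1$. This lies in $\Q$ when $y_1$ is rational. When $y_1$ is itself an exact number, it is an exact number with one fewer level of nesting, and we recurse on it.

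For 3), write $x=x_0+\epsilon_1x_1\sqrt{y_1}+\epsilon_2x_2\sqrt{y_2}$. If both signs are $+$ we are in case 1. Otherwise, expanding the product, the squares $x_i^2y_i$ are radical-free, and we are left with cross terms in $\sqrt{y_1}$, $\sqrt{y_2}$ and $\sqrt{y_1y_2}$. For instance, with $\epsilon_1=1$ and $\epsilon_2=-1$, write $x=A-B$ with $A=x_0+x_1\sqrt{y_1}$ and $B=x_2\sqrt{y_2}$. Then $x\,ABS(x)=(A-B)(A+B)=A^2-B^2=x_0^2+x_1^2y_1-x_2^2y_2+2x_0x_1\sqrt{y_1}$, which has a single radical, so we fall back on case 2. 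The case with both signs negative is handled the same way with $A=x_0$ and $B=x_1\sqrt{y_1}+x_2\sqrt{y_2}$. This gives $x_0^2-(x_1^2y_1+x_2^2y_2)-2x_1x_2\sqrt{y_1y_2}$, again one radical, now $\sqrt{y_1y_2}$. The rational part may have either sign, so I would rewrite the result in the normalized form of the statement, moving signs into the $\epsilon$'s or multiplying by $-1$ if the constant term is negative. The result is then a case-2 instance.

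For 4), with $x=A-B$, $A=x_0+x_1\sqrt{y_1}$ and $B=x_2\sqrt{y_2}+x_3\sqrt{y_3}$, we get
$$x\,ABS(x)=A^2-B^2=x_0^2+x_1^2y_1-x_2^2y_2-x_3^2y_3+2x_0x_1\sqrt{y_1}-2x_2x_3\sqrt{y_2y_3}.$$
This has two radicals instead of three, so it reduces to case 3. The restriction to this sign pattern is where I expect the main obstacle. With other patterns, for example a single minus sign on three radicals, the product contains up to four distinct radicals, such as $\sqrt{y_1}$, $\sqrt{y_1y_2}$, $\sqrt{y_1y_3}$ and $\sqrt{y_2y_3}$, and no decrease is guaranteed. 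The difference-of-squares structure $A^2-B^2$ is exactly what makes the stated pattern work.

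A further subtlety is that the word ``decrease'' needs the products $y_iy_j$ to be treated as single exact numbers. Nested radicals must also be handled recursively, so the termination measure should be the pair (number of radicals, nesting depth) ordered lexicographically.
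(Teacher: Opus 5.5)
Your proof is correct and follows the paper's own argument. Since $ABS(x)\geq x_0>0$, multiplying by it preserves the sign, and the difference-of-squares expansions $(A-B)(A+B)=A^2-B^2$ for the two $t=2$ sign patterns and for $x_0+x_1\sqrt{y_1}-x_2\sqrt{y_2}-x_3\sqrt{y_3}$ give exactly the paper's formulas; your remarks on re-normalizing signs and on nesting depth add care the paper leaves implicit.

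One slip is in your side remark. Since $x\,ABS(x)=\sum_i \epsilon_i c_i^2+\sum_{i<j}(\epsilon_i+\epsilon_j)c_i c_j$ with $c_0=x_0$, $\epsilon_0=1$, $c_i=x_i\sqrt{y_i}$, only cross terms between equal-sign entries survive. So the other mixed patterns for $t=3$ give three radicals, not four (e.g.\ $\sqrt{y_1},\sqrt{y_2},\sqrt{y_1y_2}$ for a single minus sign, as in Remark~\ref{rem2}). That is still no decrease, so your conclusion stands.
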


\medskip
\noindent
{\em Proof:} For 2), it is enough to remark that
$x_0 + x_1 \sqrt{y_1} > 0$ by assumption, therefore not changing the
sign of $x$ by multiplication.

3) When $t=2$, we reduce the problem to two cases (after reordering if
needed)
$$x = x_0 + x_1 \sqrt{y_1} - x_2 \sqrt{y_2}$$
or
$$x = x_0 - x_1 \sqrt{y_1} - x_2 \sqrt{y_2}.$$
In the first case, we use
$$sign(x) = sign(x ABS(x)) =
sign(x_0^2 + x_1^2 y_1-x_2^2 y_2+2 x_0 x_1 \sqrt{y_1})$$
and we are back to the case $t=1$.

In the second case, we do the same
$$sign(x) = sign(x ABS(x)) =
sign((x_0^2- x_1^2 y_1 - x_2^2 y_2) - 2 x_1 x_2 \sqrt{y_1 y_2})$$
and we have decreased the number of square-roots for the price of
increasing the content of the square-root. This sends us back to the case
$t=1$.

4) When $t=3$, we start with
$$x_0 + x_1 \sqrt{y_1} - x_2 \sqrt{y_2} - x_3 \sqrt{y_3}$$
which by multiplication yields:
$$x ABS(x) = (x_0+x_1\sqrt{y_1})^2-(x_2 \sqrt{y_2} + x_3 \sqrt{y_3})^2
= X_0 + X_1 \sqrt{Y_1} - X_2\sqrt{Y_2}. \Box$$
Numerical examples are given for $m=7$ in Section~\ref{ssct-7}.
\begin{remark}\label{rem2}
In some cases, we can use some tricks, for instance
write $x = x_0 + X_0 - Y$ with $X_0$ and $Y$ positive. If $X_0-Y$ is
easier to handle and $sign(X_0-Y) \geq 0$, then surely $x > 0$. If $X_0-Y
< 0$, we may have $x_0 \leq X_0$ and if $2X_0 -Y \leq 0$, then $x
\leq 0$. We could refine with $x_0 \leq x_1 \sqrt{y_1}$ and $X_0+x_1
\sqrt{y_1} \leq Y$; or even try all sums of subsets of $X$. There are
also cases where the number of square-roots does not decrease, but the
product $x ABS(x)$ yields the sign. For instance, when
$x = x_0 + x_1 \sqrt{y_1} + x_2 \sqrt{y_2} - x_3 \sqrt{y_3}$, we
compute
$$x ABS(x) 
= (x_0^2+x_1^2 y_1 + x_2^2 y_2 - x_3^2 y_3)
+2 (x_0 x_1 \sqrt{y_1} + x_0 x_2 \sqrt{y_2} + x_1 x_2 \sqrt{y_1
y_2})$$
and we get a positive sign if the first term is positive.
\end{remark}

\begin{remark}\label{rem1}
The preceding result can be easily extended to the case where
$y_k = P_k(a)$ for some polynomial in $\Q[X]$ that is positive on
the interval $\mathcal{A} = [a_{\min}, a_{\max}]$. This is the case
when we want to compare two pieces of hyperbolas in Section~\ref{sct3}.
\end{remark}

\section{Real quadratic fields $\Q(\sqrt{m})$, $m \not\equiv 1 \bmod
4$}
\label{sct3}

Let $m\not\equiv 1 \bmod 4$ be a square-free integer. Put
$\K = \Q(\sqrt{m})$; its ring of integers is $\OK = \Z[\omega]$ where
$\omega = \sqrt{m}$, and the defining polynomial of $\K$ is
$X^2-m$. The discriminant $D_K$ of $\K$ is equal to $4 m$ and
$$f_m(x, y):=\Norm(x+y \omega)= x^2-m y^2.$$
A number $x + y \omega$ for reals $x$, $y$ is identified with the
point $(x, y)$ in the plane.

The following Table recalls the list of the corresponding euclidean
number fields, together with minima and sets of critical points (some
of which coming from the very precious program of P.~Lezowski, see
\url{https://www.math.u-bordeaux.fr/~plezowsk/tables/result.php}).
$$\begin{array}{|r|c|c|}\hline
m & M_1 & C_1 \\ \hline
2 & 1/2 & \{(0, 1/2)\} \\
3 & 1/2 & \{(1/2, 1/2)\} \\
6 & 3/4 & \{(1/2, 1/2)\} \\
7 & 9/14 & \{(1/2, 5/14), (1/2, 9/14)\} \\
11 & 19/22 & \{(1/2, 15/22), (1/2, 7/22)\} \\
19 & 170/171 & \{(0, 20/57), (0, 37/57)\} \\
\hline
\end{array}$$

\subsection{Preparation}

\subsubsection{Hyperbolas and coverings}

We let $0 < M < 1$ be a real number (generally rational).
Since the norm form is $f_m(a, b) = a^2 - m b^2$, we consider
the hyperbolas of equations
$$H_{u, v}^{+}: (a+u)^2 - m (b+v)^2 = M, \quad H_{u, v}^{-}: (a+u)^2 - m
(b+v)^2 = -M$$
for $u$ and $v$ rational integers and note $\mathcal{H}_{u, v}$ the
region in between:
$$\mathcal{H}_{u, v} = \{(a, b) \in \mathcal{S}, -M \leq f_m(a+u,
b+v) \leq M\}.$$
For $(a, b) \in \mathcal{S}$, this forms 2 pieces, by symmetry
w.r.t. axes. Since $f_m(a+u, b+v) = f_m(-a-u,
-b-v)$ we reduce our study to $(a, b) \in \mathcal{S}_0 = [0,
1/2] \times [0, 1/2]$.

Consider the set $\mathcal{H}$ of $\mathcal{S}_0$ for
which $-M \leq f_m(a, b) \leq +M$. We say that $\mathcal{H}_{u, v}$
{\em covers} $(a, b)$ if $(a, b) \in \mathcal{H}_{u, v}$.
We extend this notion to that of a region $\mathcal{R} \subset
\mathcal{S}_0$. In short, we speak of $(u, v)$ covering a point
(resp. a region).

We end this list with a property related to critical points, which
tends to complicate the coverings, since they are {\em attracted} by
critical points.
\begin{proposition}
If $P = (x_0, y_0)$ is critical and is covered by $(u, v)$, then
$$f_m(x_0+u, y_0+v) = \pm M.$$
\end{proposition}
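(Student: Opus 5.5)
Here $M$ is taken to be the Euclidean minimum $M_1(\K)$ of the field; that is the setting in which coverings are used to obtain an $M_1$-division algorithm. The plan is to combine two facts. Covering gives an upper bound $|f_m(x_0+u,y_0+v)|\le M$. Criticality gives a lower bound $|f_m(x_0+u,y_0+v)|\ge M$. Together they force the norm to equal $\pm M$.

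First, unwind the definition of covering. If $(u,v)$ covers $P=(x_0,y_0)$, then by the definition of $\mathcal{H}_{u,v}$ we have $-M\le f_m(x_0+u,y_0+v)\le M$, that is, $|f_m(x_0+u,y_0+v)|\le M$.

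Second, use criticality. Put $\xi=x_0+y_0\omega\in\K$ (critical points in the table have rational coordinates) and $\gamma=-u-v\omega\in\OK$, since $u,v\in\Z$. Then
$$\Norm(\xi-\gamma)=f_m(x_0+u,y_0+v).$$
The minimum $m_\K(\xi)$ is understood, as usual, as the infimum of $|\Norm(\xi-\gamma)|$ over $\gamma\in\OK$; the absolute value is implicit in the definition given in the introduction and must be made explicit here. Because $P$ is critical, $m_\K(\xi)=M_1(\K)=M$. Hence $|\Norm(\xi-\gamma)|\ge m_\K(\xi)=M$ for every $\gamma\in\OK$, and in particular $|f_m(x_0+u,y_0+v)|\ge M$.

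Combining the two inequalities gives $|f_m(x_0+u,y_0+v)|=M$, so $f_m(x_0+u,y_0+v)=\pm M$; that is, $P$ lies on $H^{+}_{u,v}$ or on $H^{-}_{u,v}$. The symmetry reduction to $\mathcal{S}_0$ plays no role, since the argument is pointwise. The only real obstacle is bookkeeping. One must fix $M=M_1(\K)$ explicitly, since for larger $M$ the statement is false. One must also note that the definition of $m_\K$ is to be read with an absolute value. Once these conventions are stated, the proof is the two inequalities above.
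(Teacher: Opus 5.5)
Your proof is correct. The paper states this proposition without proof, and your two inequalities (covering gives $|f_m(x_0+u,y_0+v)|\le M$, criticality gives $|f_m(x_0+u,y_0+v)|\ge m_\K(\xi)=M_1(\K)$) are exactly the evident intended reasoning. Your remarks that $M$ must be taken equal to $M_1(\K)$ and that the infimum defining $m_\K$ must be read with $|\cdot|$ correctly make explicit two conventions the paper leaves unstated.
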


\subsubsection{Computations}

\begin{definition}\label{Buv}
For rational integer $u$ and $v$, and $\theta, \epsilon \in \{\pm
1\}$, define
$$B_{u, v}^{\theta, \epsilon}(a) := -v + \theta \sqrt{((a+u)^2 -
\epsilon M)/m}.$$
\end{definition}
Examples of plots for these curves are to be found in the numerous
cases below, with Figure~\ref{real2-3} to start with.
Note also that we need to draw pieces of our functions $B_{u,
v}^{\theta, \epsilon}(a)$ for $0 \leq a \leq 1/2$ and $u$ an integer
that can be relatively large. In that case we are happy to use the
approximation
$$B_{u, v}^{\theta, \epsilon}(a) \approx -v + \frac{\theta}{\sqrt{m}}
\, |a+u|.$$
\begin{lemma}\label{bfcta}
When $u \geq 0$, $B_{u, v}^{+, \mp}$ (resp. $B_{u, v}^{-, \mp}$) is
increasing (resp. decreasing) on $\mathcal{S}_0$; when $u < 0$, $B_{u,
v}^{+, \mp}$ (resp. $B_{u, v}^{-, \mp}$) is decreasing (resp. increasing) on
$\mathcal{S}_0$.
\end{lemma}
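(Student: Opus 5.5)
The lemma follows from the monotonicity of $a \mapsto (a+u)^2$ on $[0,1/2]$ combined with the monotonicity of the square root. Write
$$B_{u,v}^{\theta,\epsilon}(a) = -v + \theta\, g(a), \qquad g(a) = \sqrt{h(a)}, \qquad h(a) = \frac{(a+u)^2 - \epsilon M}{m}.$$
Wherever $h(a)>0$, the chain rule gives
$$\frac{d}{da} B_{u,v}^{\theta,\epsilon}(a) = \theta\,\frac{a+u}{m\sqrt{h(a)}} .$$
Since $m>0$ and $\sqrt{h(a)}>0$, the sign of the derivative is $\theta\cdot \mathrm{sign}(a+u)$. The whole proof therefore reduces to determining the sign of $a+u$ for $0\le a\le 1/2$.

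The case split is as follows.
\begin{itemize}
\item If $u\ge 0$, then $a+u\ge 0$ on $[0,1/2]$, with equality only at the single point $a=0$ when $u=0$. So $B^{+,\mp}_{u,v}$ is increasing and $B^{-,\mp}_{u,v}$ is decreasing.
\item If $u<0$, then $u\le -1$ because $u$ is a rational integer. Hence $a+u\le -1/2<0$ on the interval, and the monotonicity is reversed.
\end{itemize}
The sign of $\epsilon$ plays no role, which is why the statement carries the symbol $\mp$.

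To avoid derivatives, one can argue with monotone compositions instead. On $[0,1/2]$ the map $a\mapsto (a+u)^2$ is increasing when $u\ge0$ and decreasing when $u\le -1$. Subtracting the constant $\epsilon M$ and dividing by $m>0$ preserves this, and $\sqrt{\cdot}$ is increasing on the nonnegative reals. Multiplying by $\theta$ then fixes the direction, and adding $-v$ changes nothing.

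The only delicate point is the domain. The square root must be real, so the function is only defined where $(a+u)^2\ge \epsilon M$. For $\epsilon=-1$ this always holds. For $\epsilon=+1$ with $u=0$ it can fail, since $a^2<M$ is possible near $a=0$. I would state that the monotonicity holds on the subinterval of $[0,1/2]$ where $B^{\theta,\epsilon}_{u,v}$ is defined; this subinterval is itself an interval because $(a+u)^2$ is monotone on $[0,1/2]$. At the boundary point where $h$ vanishes the derivative is infinite, but the composition argument still gives monotonicity there. I expect this domain caveat, rather than any computation, to be the only real obstacle.
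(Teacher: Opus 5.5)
Your proof is correct and follows the same route as the paper, which differentiates to get $\theta(a+u)/\bigl(m\sqrt{((a+u)^2-\epsilon M)/m}\bigr)$ and reads the monotonicity off the sign of $a+u$ for $0\le a\le 1/2$. You also make explicit two points the paper leaves implicit: $u<0$ forces $u\le -1$, hence $a+u\le -1/2$; and the claim only makes sense where the radicand is nonnegative, a restriction that for $\epsilon=+1$ also affects $u=-1$, not only $u=0$.
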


\noindent
{\em Proof:} since
$$B_{u, v}^{\theta, \epsilon}(a)' = \frac{\theta}{m}\,
\frac{a+u}{\sqrt{((a+u)^2 -\epsilon M)/m}},$$
the properties to be proven depend on the sign of $a+u$ on
$\mathcal{S}_0$, hence the result. $\Box$
Our tasks will be to prove
that $B_{u_1, v_1}^{\theta_1, \epsilon_1}(a) \geq B_{u_2,
v_2}^{\theta_2, \epsilon_2}(a)$ for $a \in \mathcal{A} = [a_{\min},
a_{\max}] \subset [0, 1/2]$. This will done using Remark~\ref{rem1},
as well as proving that $B_{u, v}^{\theta, \epsilon}(a) \geq b_0$ for
some fixed value $b_0$.

\subsection{Evaluating a sequence of norms}

We expand
$$f_m(a+u, b+v) = f_m(a, b) + f_m(u, v) + 2 (a u - m b v).$$
In some cases (computing $\mathcal{C}(p, B)$ below, testing, using it
in the actual division), $a$ and $b$ are fixed, but rational integers
$u$ and $v$ vary. We may precompute all $f_m(u, v)$ (and perhaps all
$m v$'s or at least $m b$).

\subsection{An informal algorithm}

Our goal is prove that for all $(a, b) \in \mathcal{S}_0$,
there exists a covering of $(a, b)$ by $\mathcal{H}_{u,
v}$ for some integers $u$, $v$. To help us, we
define for a point $p$:
$$\mathcal{C}(p, B) = \{(u, v)
\in \Z, |u|, |v| \leq B, p\text{ is covered by } H_{u, v}^{\pm}\},$$
for some integer $B$.
In practical computations, values $B = 10^2$ or $B = 10^3$. It is
clear that larger $B$'s do not produce a lot coverings.

We give as Algorithm~\ref{covering} a sketch of our approach. We
cannot prove that the algorithm terminates in all cases, nor say
anything on the complexity (but note this is a one-time
computation). Some parts of it are rather vague, but will be
demonstrated in the cases below. The idea of using the barycenter
relies on the idea to help cover some sub-region that could help
covering the whole region.

\begin{algorithm}[H]
\caption{Covering a region. \label{covering}}
  \SetKwInOut{Input}{input}\SetKwInOut{Output}{output}
  \SetKwProg{Fn}{Function}{}{}
\Fn{\Covering($m, M, \mathcal{R}$)}{
  \Input{$M$ is a bound on norms in $\Q(\sqrt{m})$}
  \Output{A covering for $\mathcal{R}$}
  \For{$p \in \mathcal{R}$}{
     $UV[p] = \mathcal{C}(p, B)$\;
  }
  \If{$\exists (u, v)$ common to all $UV[p]$}{
     \Return{$\mathcal{R}_{u, v}$ built from $\mathcal{R}$.}
  }
  \ElseIf{$\exists (u, v)$ covering more than two points}{
     build $\mathcal{R}_{u, v}$, the sub-region of $\mathcal{R}$
  covered by $(u, v)$, and its complement $\mathcal{R}'$\;
     \Return{$\mathcal{R}_{u, v} \cup \Covering(m, M, \mathcal{R}')$}
  }
  \Else{compute the barycentre $q$ of $\mathcal{R}$ and try to cover
  pieces of $\mathcal{R} \cup \{q\}$.
  }
}
\end{algorithm}
Our approach builds a sequence of points $P_i$, always starting
with $\mathcal{S}_0$ represented by its four corner points:
$$P_0 = (0, 0), P_1 = (1/2, 0), P_2 = (1/2, 1/2), P_3 = (0, 1/2).$$
Critical points will be denoted by $P_c$ or $P_{c_1}$, $P_{c_2}$,
\ldots; (such critical points will be printed with the symbol
$\circ$ in our figures). The output of this algorithm is a collection of
regions $\mathcal{R}_{u_i, v_i}$ that cover the square $\mathcal{S}_0$. A
typical regions is composed of (indices of) points related by lines or
pieces of hyperbolas $B_{u, v}^{\theta, \epsilon}$. See the case $m=6$
for examples of regions in Section~\ref{ssct-6}.

\subsection{The cases $m = 2, 3$}

The proofs of norm-Euclideanity for these three cases were done by
Perron~\cite{Perron1933}, $2$ and $3$ already done by
Dedekind~\cite{Dirichlet1893}. The case of 2 is carefully examined in
\cite{Varnavides1948ab}. For these cases, we proceed directly, but
looking at Figure~\ref{real2-3} is worthwhile.

\begin{proposition}
Let $0 \leq a, b \leq 1/2$.

i) For $m=2$, $|f_m(a, b)| \leq M(f_m) = 1/4$.

ii) For $m=3$, there exists $(u, v) \in \{(0, 0), (-1, 0)\}$ such that
$$|f_m(a+u, b+v)| \leq M(f_m).$$
\end{proposition}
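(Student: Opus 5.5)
The plan is direct: on $\mathcal{S}_0=[0,1/2]\times[0,1/2]$, bound each of the two terms of $f_m(a,b)=a^2-mb^2$ separately using monotonicity of squares. No hyperbola covering or exact sign computation from Section~\ref{sct-tools} is needed. The two sides of the bound, $f_m\le M$ and $f_m\ge -M$, are handled independently.

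For i), $m=2$. The upper side is immediate: $f_2(a,b)=a^2-2b^2\le a^2\le 1/4$. For the lower side I would use $a^2\ge 0$ and $b^2\le 1/4$, which give $f_2(a,b)\ge -2b^2\ge -1/2$.

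This lower bound is the main obstacle, and I expect the argument to break exactly here. At the corner $P_3=(0,1/2)$ one has $f_2(0,1/2)=-1/2$. This point is the critical point listed in the Table, with $M_1=1/2$. So the value $1/4$ can only control the positive side $f_2\le 1/4$. Since $P_3$ is critical, no other $(u,v)$ can lower the value at $P_3$ below $1/2$ in absolute value either. The only thing I would still check is whether $M(f_m)$ is meant as $\max_{\mathcal{S}_0}f_m$ rather than as a bound on $|f_m|$.

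If that reading is not intended, then the stated bound $|f_2|\le 1/4$ fails at $P_3$. The proof would then establish the two-sided estimate $-1/2\le f_2(a,b)\le 1/4$ on $\mathcal{S}_0$, with equality on the left only at $P_3$. The sharp constant for $|f_2|$ is therefore $1/2=M_1$.

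For ii), $m=3$, I take $M=M_1=1/2$ from the Table. The upper bound is again free for both candidates $(u,v)=(0,0)$ and $(-1,0)$:
\begin{equation*}
a^2-3b^2\le 1/4, \qquad (a-1)^2-3b^2\le (a-1)^2\le 1,
\end{equation*}
and for $(a,b)$ handled by $(-1,0)$ the upper estimate will be recovered below.

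The strategy is a case split on the size of $3b^2$.
\begin{itemize}
\item If $3b^2\le a^2+1/2$, then $(0,0)$ works, since $-1/2\le a^2-3b^2\le 1/4$.
\item If $3b^2> a^2+1/2$, use $(-1,0)$. For the lower bound, $(1-a)^2\ge 1/4$ and $3b^2\le 3/4$ give $(a-1)^2-3b^2\ge -1/2$. For the upper bound,
\begin{equation*}
3b^2-(1-a)^2+1/2> a^2+1-(1-a)^2=2a\ge 0,
\end{equation*}
so $(a-1)^2-3b^2<1/2$.
\end{itemize}
Everything here reduces to elementary inequalities in $a$ and $b$, and the key identity is the last display. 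Equality cases sit at $P_2=(1/2,1/2)$, the critical point, where both pairs give the value $-1/2$.
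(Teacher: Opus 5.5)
Your argument is correct and follows the paper's own proof. For i), the paper likewise bounds $-m/4\le f_m(a,b)\le 1/4$ on $\mathcal{S}_0$ and concludes $|f_2(a,b)|\le m/4=1/2=M_1$, so you are right that the literal constant $1/4$ fails at the critical point $P_3=(0,1/2)$ and should be read as $m/4$. For ii), the paper makes the same case split on whether $a^2-3b^2<-1/2$ and switches to $(-1,0)$; the only difference is that it gets the upper bound from $(a-1)^2\le 1$ and $3b^2>1/2$, rather than from your slightly sharper estimate with slack $2a$.
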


\noindent
{\em Proof:}

i) For $0 \leq a, b \leq 1/2$, we get $-m/4 \leq f_m(a, b) \leq 1/4$, so that
$|f_m(a, b)| \leq m/4$. See Figure~\ref{real2-3}.

ii) For $m=3$, suppose that $a^2-3 b^2 < -1/2$. We deduce $3 b^2 >
1/2$, and with $-1 \leq a-1 \leq -1/2$ this implies
$$-\frac{1}{2} \leq \frac{1}{4} - 3 b^2 \leq (a-1)^2 - 3 b^2 < 1-\frac{1}{2}.$$
See Figure~\ref{real2-3}.
\begin{figure}[hbt]
\begin{tikzpicture}[>=latex,scale=1.15]
\draw[->] (-2.5, 0) -- (2.5, 0); \node at (2.4, 0.2) {$a$};
\draw[->] (0, -1.5) -- (0, 2); \node at (-0.2, 2) {$b$};
\draw[domain=-2:2] plot (\x, {sqrt((\x*\x+0.5)/2)});
\draw[domain=-2:2] plot (\x, {-sqrt((\x*\x+0.5)/2)});
\node at (0.75, 1.25) {$f_2 < -M_1$};
\node at (0.75, -1.25) {$f_2 < -M_1$};
\draw[domain=-2:-0.707] plot (\x, {sqrt((\x*\x-0.5)/2)});
\draw[domain=-2:-0.707] plot (\x, {-sqrt((\x*\x-0.5)/2)});
\draw[domain=0.7072:2] plot (\x, {sqrt((\x*\x-0.5)/2)});
\draw[domain=0.7072:2] plot (\x, {-sqrt((\x*\x-0.5)/2)});
\node at (-2, -0.5) {$f_2 > M_1$};
\node at ( 2, -0.5) {$f_2 > M_1$};
\draw[dashed] (-1, -1) -- (-1, 1);
\draw[dashed] ( 1, -1) -- ( 1, 1);
\draw[dashed] (-0.5, -1) -- (-0.5, 1);
\draw[dashed] ( 0.5, -1) -- ( 0.5, 1);
\draw[dashed] (-1, -0.5) -- (1, -0.5);
\draw[dashed] (-1, 0.5) -- (1, 0.5);
\node (P0) at (0, 0) {$\bullet$};
\node (Pc) at (0, 0.5) {$\circ$};
\end{tikzpicture}
\begin{tikzpicture}[>=latex,scale=1.15]
\draw[->] (-2.5, 0) -- (2.5, 0); \node at (2.4, 0.2) {$a$};
\draw[->] (0, -1.5) -- (0, 2); \node at (-0.2, 2) {$b$};
\draw[domain=-2:2] plot (\x, {sqrt((\x*\x+0.5)/3)});
\draw[domain=-2:2] plot (\x, {-sqrt((\x*\x+0.5)/3)});
\node at (0.75, 1.25) {$f_3 < -M_1$};
\node at (0.75, -1.25) {$f_3 < -M_1$};
\draw[domain=-2:-0.707] plot (\x, {sqrt((\x*\x-0.5)/3)});
\draw[domain=-2:-0.707] plot (\x, {-sqrt((\x*\x-0.5)/3)});
\draw[domain=0.7072:2] plot (\x, {sqrt((\x*\x-0.5)/3)});
\draw[domain=0.7072:2] plot (\x, {-sqrt((\x*\x-0.5)/3)});
\node at (-2, -0.5) {$f_3 > M_1$};
\node at ( 2, -0.5) {$f_3 > M_1$};
\draw[dashed] (-1, -1) -- (-1, 1);
\draw[dashed] ( 1, -1) -- ( 1, 1);
\draw[dashed] (-0.5, -1) -- (-0.5, 1);
\draw[dashed] ( 0.5, -1) -- ( 0.5, 1);
\draw[dashed] (-1, -0.5) -- (1, -0.5);
\draw[dashed] (-1, 0.5) -- (1, 0.5);
\node (P0) at (0, 0) {$\bullet$};
     \node at (-0.3, -0.3) {$P_0$};
\node (P1) at (0.5, 0) {$\bullet$};
     \node at (0.4, -0.2) {$P_1$};
\node (P2) at (0.5, 0.5) {$\circ$};
     \node at (0.3, 0.7) {$P_2$};
\node (P3) at (0, 0.5) {$\bullet$};
     \node at (-0.2, 0.7) {$P_3$};
\end{tikzpicture}
\caption{The cases $m=2$ (left), $m=3$ (right). \label{real2-3}}
\end{figure}
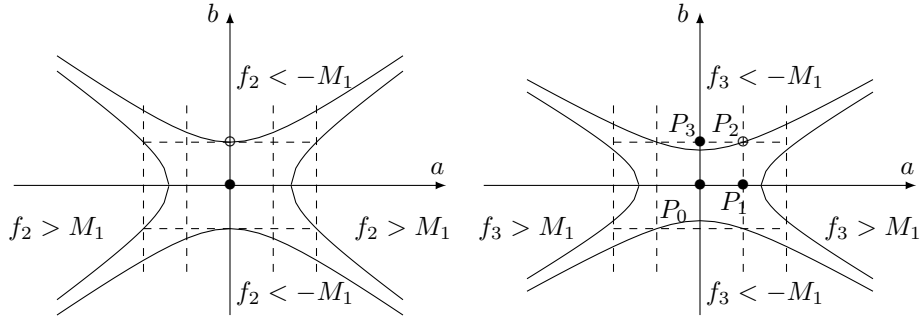

\subsection{The case $m=6$}
\label{ssct-6}

The proof of norm-Euclideanity for this case was done by
Perron~\cite{Perron1933}.

\begin{theorem}
For $m=6$, the square $\mathcal{S}_0$ can be covered by the pairs
$$(0, 0), (1, 0), (-2, -1).$$
\end{theorem}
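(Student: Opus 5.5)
The plan is to reduce the statement to explicit inequalities on $\mathcal{S}_0=[0,1/2]\times[0,1/2]$ with $M=M_1=3/4$. We then cut $\mathcal{S}_0$ into three regions bounded by pieces of the hyperbolas $B_{u,v}^{\theta,\epsilon}$ of Definition~\ref{Buv}, and check every required inequality exactly, as in Section~\ref{sct-tools}. The case $m=6$ is simple enough that each comparison becomes the sign of an explicit polynomial in $a$, so no radicals survive.

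\textbf{Step 1: the pair $(0,0)$.} For $(a,b)\in\mathcal{S}_0$ we have $f_6(a,b)=a^2-6b^2\le 1/4<3/4$, so only the lower bound can fail. Hence $(0,0)$ covers exactly the points with
$$b\le B_{0,0}^{+,-}(a)=\sqrt{(a^2+3/4)/6}.$$

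\textbf{Step 2: the pair $(1,0)$.} We need $-3/4\le (a+1)^2-6b^2\le 3/4$.
\begin{itemize}
\item The upper constraint $b\le\sqrt{((a+1)^2+3/4)/6}$ always holds, because the right side is at least $\sqrt{7/24}>1/2$.
\item The lower constraint is $b\ge B_{1,0}^{+,+}(a)=\sqrt{((a+1)^2-3/4)/6}$.
\end{itemize}
So $(0,0)$ and $(1,0)$ together cover $\mathcal{S}_0$ unless $B_{0,0}^{+,-}(a)<b<B_{1,0}^{+,+}(a)$. Comparing the squares, this gap is nonempty only when $a^2+3/4<(a+1)^2-3/4$, that is, when $a>1/4$. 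We are left with the thin region
$$\mathcal{R}=\{\,1/4<a\le 1/2,\ B_{0,0}^{+,-}(a)<b<B_{1,0}^{+,+}(a)\,\}.$$
It ends at the critical point $P_2=(1/2,1/2)$.

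\textbf{Step 3: the pair $(-2,-1)$ on $\mathcal{R}$.} Set
$$g(a,b)=f_6(a-2,b-1)=a^2-4a-2+12b-6b^2.$$
For $b<1$ this is increasing in $b$, so it suffices to check the two boundary curves of $\mathcal{R}$.
\begin{itemize}
\item \emph{Upper boundary.} On $6b^2=(a+1)^2-3/4$ we get $g=12b-6a-9/4$. The condition $g\le 3/4$ becomes $b\le(2a+1)/4$. After squaring, this reduces to $8(a+1)^2-6\le 3(2a+1)^2$, which is equivalent to $-(2a-1)^2\le 0$. This always holds, with equality exactly at the critical point $a=1/2$, as the earlier proposition on critical points predicts.
\item \emph{Lower boundary.} On $6b^2=a^2+3/4$ we get $g=12b-4a-11/4$. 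The condition $g\ge -3/4$ becomes $b\ge(2a+1)/6$. After squaring, this reduces to $2a^2-4a+7/2>0$, which holds because the discriminant is negative (case $-$ of Lemma~\ref{signquad}).
\end{itemize}
By monotonicity in $b$, we get $|g|\le 3/4$ on all of $\mathcal{R}$, so $(-2,-1)$ covers the remaining region.

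\textbf{Main obstacle.} The delicate point is Step 3 near $P_2$. There the covering by $(-2,-1)$ is tight: the norm equals $M$ exactly, so any slack-based or floating-point argument fails. This is why I would reduce each comparison to a polynomial inequality, $-(2a-1)^2\le 0$ in this case, which can be checked exactly. The squaring steps are legitimate because both sides are nonnegative for $a\in[0,1/2]$.
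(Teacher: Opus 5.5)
Your proof is correct and follows the paper's route. It uses the same three pairs: $(0,0)$ covers the points below $B_{0,0}^{+-}$, $(1,0)$ covers those above $B_{1,0}^{++}$, and $(-2,-1)$ covers the remaining sliver over $[1/4,1/2]$, where your lower-edge condition $2a^2-4a+7/2>0$ is exactly the paper's $4a^2-8a+7\geq 0$. The one difference is in your favour: on the upper edge you check $B_{1,0}^{++}\leq B_{-2,-1}^{-+}$, which reduces to $(2a-1)^2\geq 0$ (the two curves are tangent at $P_2$). The paper's listed inequalities compare $B_{-2,-1}^{-+}$ only with $B_{0,0}^{+-}$, and that comparison by itself does not control the upper edge $B_{1,0}^{++}$ of the sliver.
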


\medskip
\noindent
{\em Proof:} 
There is one critical point in $\mathcal{S}_0$ which is $P_c = (1/2,
1/2)$. We start with the covering by $(0, 0)$, yielding the point
$$P_{4}=\left(0, 1/4\,\sqrt {2}\approx 0.353553\right).$$
The first region is
$$\mathcal{R}_{0, 0} = (0, 1, c, [0, 0, 1, -1], 4, 0),$$
which says that $(0, 0)$ covers the region delimited by
$\overline{P_0P_1P_c}$ followed by $B_{0, 0}^{+-}$ connecting $P_c$ to
$P_4$, followed by the line $\overline{P_4P_0}$.

We add covering by $(1, 0)$ which takes care of $P_{2}P_{3}P_{4}$,
introducing the point
$$P_{5}=B_{1, 0}^{++} \cap B_{0, 0}^{+-}=\left(1/4,
1/24\,\sqrt {78}\approx 0.367990\right).$$
We prove (using Remark~\ref{rem1}) the covering of $P_{2}P_{3}P_{4}$ with
$$B_{1, 0}^{+-} \geq B_{0, 0}^{+-}\text{ for } a \in [0, 1/2], \text{i.e.}, 
2\,a+1 \geq 0;$$
$$B_{0, 0}^{+-} \geq B_{1, 0}^{++}\text{ for } a \in [0, x(P_{5})],
\text{i.e.}, 1-4\,a \geq 0.$$
We prove that $P_2P_3$ is covered using
$$B_{1, 0}^{+-} \geq 1/2\text{ for } a \in [0, 1/2], \text{i.e.}, 
4\,{a}^{2}+8\,a+1 \geq 0: \text{case }a_2;$$ 
$$B_{1, 0}^{++} \leq 1/2\text{ for } a \in [0, 1/2], \text{i.e.}, 
4\,{a}^{2}+8\,a-5 \geq 0: \text{case b}.$$ 
This yields the region
$$\mathcal{R}_{1, 0} = (2, 3, 4, [0, 0, 1, -1], 5, [1, 0, 1, 1], 2).$$

Finally, we conclude with $(-2, -1)$ with proofs
$$B_{-2, -1}^{-+} \geq B_{0, 0}^{+-}\text{ for } a \in [x(P_{5}),
1/2], \text{i.e.},
-32\,{a}^{2}+112\,a-23 \geq 0: \text{case b};$$ 
$$B_{0, 0}^{+-} \geq B_{-2, -1}^{--}\text{ for } a \in [1/4, 1/2], \text{i.e.}, 
4\,{a}^{2}-8\,a+7 \geq 0: \text{case -}.$$ 
And the region is
$$\mathcal{R}_{-2, -1} = (c, 2, [-2, -1, -1, 1], 5, [0, 0, 1, -1], c).$$

\begin{figure}[hbt]
\begin{tikzpicture}[>=latex,scale=4]
\draw[->] (-0.1, 0) -- (1.5, 0); 
\draw[->] (0, -0.5) -- (0, 0.8); 
\draw[domain=-0.1:1] plot (\x, {sqrt((\x*\x+0.75)/6)}) node[right] {$B_{0, 0}^{+ -}$};
\draw[domain=-0.1:1] plot (\x, {-sqrt((\x*\x+0.75)/6)}) node[right] {$B_{0, 0}^{- -}$};
\draw[dashed] ( 0.5, -0.5) -- ( 0.5, 1);
\draw[dashed] ( 0, 0.5) -- ( 0.5, 0.5);

\coordinate (P_{0}) at (0.000000, 0.000000); \node at (P_{0}) {$\bullet$};
       \node at (-0.100000, -0.100000) {$P_{0}$};
\coordinate (P_{1}) at (0.500000, 0.000000); \node at (P_{1}) {$\bullet$};
       \node at (0.60000, -0.10000) {$P_{1}$};
\coordinate (P_{2}) at (0.500000, 0.500000); \node at (P_{2}) {$\bullet$};
       \node at (0.5500000, 0.600000) {$P_{2}$};
\coordinate (P_{3}) at (0.000000, 0.500000); \node at (P_{3}) {$\bullet$};
       \node at (-0.200000, 0.600000) {$P_{3}$};
\coordinate (P_{4}) at (0.000000, 0.353553); \node at (P_{4}) {$\bullet$};
       \node at (-0.1000000, 0.3) {$P_{4}$};
\coordinate (P_{c}) at (0.500000, 0.408248); \node at (P_{c}) {$\circ$};
       \node at (0.4500000, 0.345) {$P_{c}$};

\draw[domain=-.1339745962:1]
plot (\x, {(0.00000) * \x + (0.00000) + (1.00000) * sqrt((0.16667)*\x*\x+(0.33333)*\x+(0.04167))})
 node[below] {$B_{1, 0}^{+ +}$};

\draw[domain=-0.1:1]
plot (\x, {(0.00000) * \x + (0.00000) + (1.00000) * sqrt((0.16667)*\x*\x+(0.33333)*\x+(0.29167))})
 node[above] {$B_{1, 0}^{+ -}$};
\coordinate (P_{5}) at (0.250000, 0.367990); \node at (P_{5}) {$\bullet$};
     \node at (0.250000, 0.28) {$P_{5}$};

\draw[domain=-0.1:1,very thick]
plot (\x, {(0.00000) * \x + (1.00000) + (-1.00000) * sqrt((0.16667)*\x*\x+(-0.66667)*\x+(0.54167))})
 node[right] {$B_{-2, -1}^{- +}$};

\draw[domain=-0.1:1,very thick]
plot (\x, {(0.00000) * \x + (1.00000) + (-1.00000) * sqrt((0.16667)*\x*\x+(-0.66667)*\x+(0.79167))})
 node[below] {$B_{-2, -1}^{- -}$};

\end{tikzpicture}
\caption{The case $m=6$ (right). \label{real6}}
\end{figure}
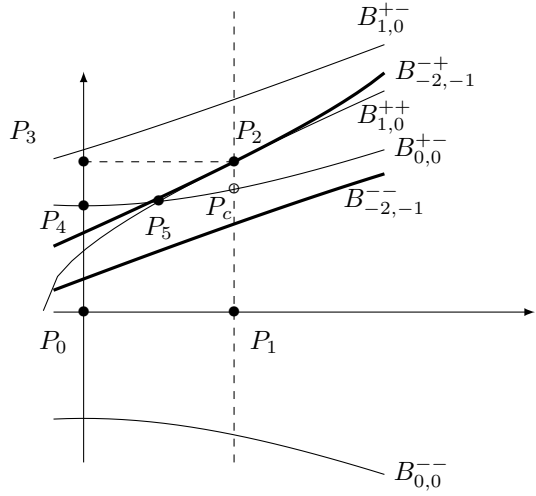

\subsection{The case $m=7$}
\label{ssct-7}

The first proof of Euclideanity was given by Perron~\cite{Perron1933}.
The first minimum was computed in~\cite{Varnavides1949}.

\begin{theorem}
For $m=7$, the square $\mathcal{S}_0$ can be covered by the four pairs
$$(0, 0), (1, 0), (-4, 1), (-2, -1).$$
\end{theorem}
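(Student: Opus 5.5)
The plan is to follow the same scheme as for $m=6$. Take $M=M_1=9/14$ and run Algorithm~\ref{covering} by hand on $\mathcal{S}_0$, starting from $P_0,\dots,P_3$ and the critical point $P_c=(1/2,5/14)$. The result should be four regions $\mathcal{R}_{0,0},\mathcal{R}_{1,0},\mathcal{R}_{-4,1},\mathcal{R}_{-2,-1}$. Each region is bounded by segments and arcs of the curves $B_{u,v}^{\theta,\epsilon}$ of Definition~\ref{Buv}. Each inclusion between arcs is certified by a quadratic sign computation (Lemma~\ref{signquad}, Remark~\ref{rem1}), or by the exact-number sign Proposition when nested radicals appear.

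\emph{Step 1: the pair $(0,0)$.} On $\mathcal{S}_0$ we have $a^2\le 1/4<9/14$, so the only constraint is $b\le B_{0,0}^{+-}(a)=\sqrt{(a^2+9/14)/7}$. This curve goes from $P_4=(0,3/\sqrt{98})\approx(0,0.303)$ to $P_c=(1/2,5/14)$. Hence $\mathcal{R}_{0,0}=(0,1,c,[0,0,1,-1],4,0)$, exactly as for $m=6$. It remains to cover the cap lying above $B_{0,0}^{+-}$ inside $\mathcal{S}_0$.

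\emph{Step 2: the pairs $(1,0)$ and $(-4,1)$ on the left part.} The pair $(1,0)$ covers the band $B_{1,0}^{++}\le b\le B_{1,0}^{+-}$. At $a=0$ this is $[\sqrt{5/98},\sqrt{23/98}]\approx[0.226,0.484]$, and both curves increase by Lemma~\ref{bfcta}. I will show $B_{1,0}^{++}\le B_{0,0}^{+-}$ on $[0,x(P_5)]$, where $P_5=B_{1,0}^{++}\cap B_{0,0}^{+-}$. I will also compute $P_6=B_{1,0}^{+-}\cap\{b=1/2\}$, which is found from $(a+1)^2=7/4+9/14$.

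The small corner near $P_3=(0,1/2)$ left uncovered by $(1,0)$ is handled by $(-4,1)$. Indeed $f_7(-4,3/2)=16-63/4=1/4$. So I check that on $[0,x(P_6)]$ one has $B_{-4,1}^{--}\le B_{1,0}^{+-}$ and $B_{-4,1}^{-+}\ge 1/2$, using the case \emph{a}/\emph{b} analysis of Lemma~\ref{signquad}.

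\emph{Step 3: the pair $(-2,-1)$ on the right part.} One checks $f_7(-3/2,-9/14)=9/4-81/28=-9/14$. So $P_c$ lies on both $B_{0,0}^{+-}$ and $B_{-2,-1}^{-+}$, as the Proposition on critical points predicts. Moreover $f_7(-3/2,-1/2)=1/2$, so $(-2,-1)$ covers $P_2$.

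The region $\mathcal{R}_{-2,-1}$ is then bounded by $P_c$, $P_2$, an arc of $B_{-2,-1}^{--}$ meeting $b=1/2$ (or $B_{1,0}^{+-}$), then $B_{1,0}^{++}$ down to $P_5$, and $B_{0,0}^{+-}$ back to $P_c$. The required inequalities are of three kinds:
\begin{itemize}
\item $B_{-2,-1}^{-+}\ge B_{0,0}^{+-}$ on $[x(P_5),1/2]$, with equality only at $a=1/2$;
\item $B_{-2,-1}^{--}$ stays above the relevant upper boundary;
\item the two pieces $\mathcal{R}_{1,0}$ and $\mathcal{R}_{-2,-1}$ overlap across $B_{1,0}^{++}$, i.e.\ $B_{-2,-1}^{-+}\ge B_{1,0}^{++}$ near $P_5$.
\end{itemize}

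\emph{Main obstacle.} I expect the hard part to be the neighbourhood of $P_c$, where $B_{0,0}^{+-}$ and $B_{-2,-1}^{-+}$ meet. The inequality must be proved with a double root or an endpoint root at $a=1/2$, so no slack is available and floating point is useless.

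The first thing I will try is to square and rationalise via Remark~\ref{rem1}. The difference $B_{-2,-1}^{-+}-B_{0,0}^{+-}$ has the form $1-\sqrt{P_1(a)}-\sqrt{P_2(a)}$, which is case 3) of the sign Proposition. One multiplication by $ABS$ reduces it to a single radical, and squaring again gives a polynomial in $a$ whose sign on the interval should be read from Lemma~\ref{signquad}, with the root located exactly at $1/2$.

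If the intersection points (such as $P_5$ and the meeting point of $B_{-2,-1}^{--}$ with $B_{1,0}^{+-}$) have nested-radical coordinates, I will compare them by case 4) of the Proposition or the tricks of Remark~\ref{rem2}.
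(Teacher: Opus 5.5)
Your decomposition matches the paper's. It has $(0,0)$ below $B_{0,0}^{+-}$, the $(1,0)$ band $B_{1,0}^{++}\le b\le B_{1,0}^{+-}$, $(-4,1)$ for the corner above $B_{1,0}^{+-}$ near $P_3$, and $(-2,-1)$ for the sliver between $B_{0,0}^{+-}$ and $B_{1,0}^{++}$ ending at $P_c$. Each piece is certified through Remark~\ref{rem1} and Lemma~\ref{signquad}, as in the paper. The problem is that this proof is nothing more than a short list of inequalities between specific branches on specific intervals, and several of yours use the wrong branch or point the wrong way. As written, they would not verify.

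\emph{The $(-4,1)$ step.} On $\mathcal{S}_0$ we have $b+1>0$, so the $(-4,1)$ band is $B_{-4,1}^{++}\le b\le B_{-4,1}^{+-}$. The $\theta=-$ branches lie below $b=-2$, so your $B_{-4,1}^{-+}\ge 1/2$ is false. Let $x_8=-1+\sqrt{217}/14\approx0.0522$ be the abscissa where $B_{1,0}^{+-}$ reaches $b=1/2$. What must hold on $[0,x_8]$ is $B_{-4,1}^{+-}\ge 1/2$ and $B_{-4,1}^{++}\le B_{1,0}^{+-}$. These reduce to $28a^2-224a+25\ge0$ and $-392a^2+1036a+5\ge0$. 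Note that $x_8$ comes from $(a+1)^2=7/4-9/14=31/28$; with your $7/4+9/14$ the point would fall outside $\mathcal{S}_0$.

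\emph{The $(-2,-1)$ step.} The more serious slip is at $P_c$, exactly where you located the difficulty. Since $f_7(-3/2,-9/14)=-M$, i.e.\ $\epsilon=-1$, the point $P_c$ lies on $B_{-2,-1}^{--}$. Because $b-1<0$, this is the \emph{lower} edge of the band $B_{-2,-1}^{--}\le b\le B_{-2,-1}^{-+}$. It is not on $B_{-2,-1}^{-+}$, since $B_{-2,-1}^{-+}(1/2)=1-3\sqrt5/14\approx0.521$. This has three consequences:
\begin{enumerate}
\item Your first bullet has no zero at $a=1/2$: the gap there is about $0.164$, and this is not the inequality you need.
\item Your second bullet is reversed: $B_{-2,-1}^{--}$ must stay \emph{below} $B_{0,0}^{+-}$, not above the upper boundary.
\item Your boundary description of $\mathcal{R}_{-2,-1}$ cannot be realised, because $B_{-2,-1}^{--}\le 5/14$ on $[0,1/2]$ and so it never meets $b=1/2$.
\end{enumerate}

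The correct certificate is the paper's, in two parts.
\begin{enumerate}
\item $B_{0,0}^{+-}\ge B_{-2,-1}^{--}$ on $[0,1/2]$. Your ABS-multiplication and squaring turn this into $4a^2-8a+3=(2a-1)(2a-3)\ge0$, with the zero exactly at $P_c$ (case $a_1$).
\item $B_{-2,-1}^{-+}\ge B_{1,0}^{++}$ on all of $[1/7,1/2]$, not just near the intersection point; the paper proves it on $[0,1/2]$. This becomes $4a^2-4a+3\ge0$, which has negative discriminant.
\end{enumerate}
The $(-2,-1)$ region is then just the curvilinear triangle between $B_{0,0}^{+-}$ and $B_{1,0}^{++}$ for $a\in[1/7,1/2]$. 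The point $P_2$ needs nothing from $(-2,-1)$, since $f_7(3/2,1/2)=1/2$ puts it in the $(1,0)$ band. With these corrections your plan coincides with the paper's proof.
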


\medskip
\noindent
{\em Proof:} 
There is one critical point in $\mathcal{S}_0$ which is $P_c = (1/2,
5/14)$, and we compute
$$\mathcal{C}(P_c, 10^2)=\{[-57, 21], [-26, -10], [-12,
4], [-4, 1], [-2, -1], [-1, 0], [0, 0], [1, -1],$$
$$[3, 1], [11, 4], [25, -10], [56, 21] \},$$

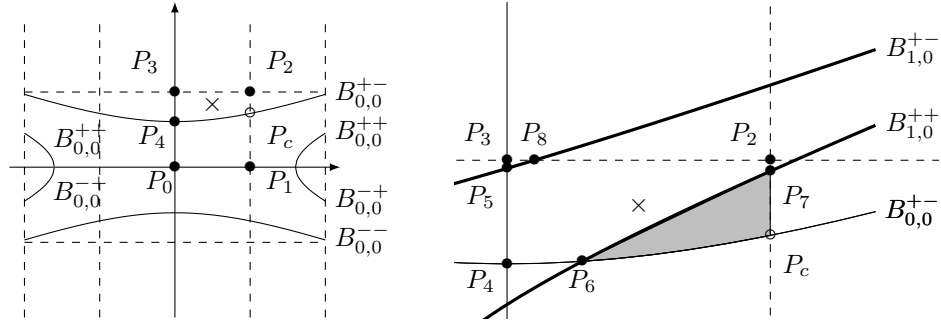
\begin{figure}[hbt]
\begin{center}
\begin{tikzpicture}[>=latex,xscale=2,yscale=2]
\draw[->] (-1.1, 0) -- (1.1, 0); 
\draw[->] (0, -1) -- (0, 1.1); 

\draw[domain=-1:1] plot (\x, {0+sqrt(((\x+(0))*(\x+(0))+0.6429)/7)}) node[right] {$B_{0, 0}^{+-}$};
\draw[domain=-1:1] plot (\x, {0-sqrt(((\x+(0))*(\x+(0))+0.6429)/7)}) node[right] {$B_{0, 0}^{--}$};
\draw[domain=-1:-.8017837257] plot (\x,
{0+sqrt(((\x+(0))*(\x+(0))-0.6429)/7)}) node[above] {$\quad\quad B_{0, 0}^{++}$};
\draw[domain=-1:-.8017837257] plot (\x,
{0-sqrt(((\x+(0))*(\x+(0))-0.6429)/7)}) node[below] {$\quad\quad B_{0, 0}^{-+}$};
\draw[domain=.8017837257:1] plot (\x, {0+sqrt(((\x+(0))*(\x+(0))-0.6429)/7)}) node[right] {$B_{0, 0}^{++}$};
\draw[domain=.8017837257:1] plot (\x, {0-sqrt(((\x+(0))*(\x+(0))-0.6429)/7)}) node[right] {$B_{0, 0}^{-+}$};

\draw[dashed] (-1, -1) -- (-1, 1);
\draw[dashed] ( 1, -1) -- ( 1, 1);
\draw[dashed] (-0.5, -1) -- (-0.5, 1);
\draw[dashed] ( 0.5, -1) -- ( 0.5, 1);
\draw[dashed] (-1, -0.5) -- (1, -0.5);
\draw[dashed] (-1, 0.5) -- (1, 0.5);
\node (P_4) at (0, 0.303) {$\bullet$}; \node at (-0.15, 0.2) {$P_4$};
\node (P_0) at (0, 0) {$\bullet$}; \node at (-0.1, -0.1) {$P_0$};
\node (P_1) at (0.5, 0) {$\bullet$}; \node at (0.7, -0.1) {$P_1$};
\node (P_2) at (0.5, 0.5) {$\bullet$}; \node at (0.7, 0.7) {$P_2$};
\node (P_3) at (0, 0.5) {$\bullet$}; \node at (-0.2, 0.7) {$P_3$};
\node at (0.5, 0.357) {$\circ$}; \node at (0.7, 0.2) {$P_c$};
\node (B) at (0.25, 0.4150) {$\times$};
\end{tikzpicture}
\hspace*{5mm}
\begin{tikzpicture}[>=latex,xscale=7,yscale=7]
\clip (-0.1, 0.2) rectangle (0.82, 0.8);
\draw[->] (-2, 0) -- (2, 0) node[very near end,above] {$a$};
\draw[->] (0, -1) -- (0, 1.5) node[very near end,left] {$b$};

\coordinate (P_{c}) at (0.5, 0.357);
\coordinate (P_{6}) at (0.142857, 0.307818);
\coordinate (P_{7}) at (0.500000, 0.479157);

\draw[domain=0.142857:0.5] plot (\x, {0+sqrt(((\x+(0))*(\x+(0))+0.6429)/7)})
(P_{c}) -- (P_{7}) -- (P_{6}) [fill=lightgray];
\draw[domain=-1:0.7] plot (\x, {0+sqrt(((\x+(0))*(\x+(0))+0.6429)/7)})
node[right] {$B_{0, 0}^{+-}$};

\draw[domain=-1:0.7] plot (\x, {0+sqrt(((\x+(0))*(\x+(0))+0.6429)/7)}) node[right] {$B_{0, 0}^{+-}$};
\draw[domain=-1:0.7] plot (\x, {0-sqrt(((\x+(0))*(\x+(0))+0.6429)/7)}) node[right] {$B_{0, 0}^{--}$};

\draw[domain=-1:0.7,very thick] plot (\x, {0+sqrt(((\x+(1))*(\x+(1))+0.6429)/7)}) node[right] {$B_{1, 0}^{+-}$};
\draw[domain=-1:0.7,very thick] plot (\x, {0-sqrt(((\x+(1))*(\x+(1))+0.6429)/7)}) node[right] {$B_{1, 0}^{--}$};
\draw[domain=-.1982162743:0.7,very thick] plot (\x, {0+sqrt(((\x+(1))*(\x+(1))-0.6429)/7)}) node[right] {$B_{1, 0}^{++}$};
\draw[domain=-.1982162743:0.7,very thick] plot (\x, {0-sqrt(((\x+(1))*(\x+(1))-0.6429)/7)}) node[right] {$B_{1, 0}^{-+}$};

\draw[dashed] (-1, -1) -- (-1, 1);
\draw[dashed] ( 1, -1) -- ( 1, 1);
\draw[dashed] (-0.5, -1) -- (-0.5, 1);
\draw[dashed] ( 0.5, -1) -- ( 0.5, 1);
\draw[dashed] (-1, -0.5) -- (1, -0.5);
\draw[dashed] (-1, 0.5) -- (1, 0.5);
\node (P_4) at (0, 0.303) {$\bullet$}; \node at (-0.05, 0.27) {$P_4$};
\node (P_2) at (0.5, 0.5) {$\bullet$}; \node at (0.45, 0.55) {$P_2$};
\node (P3) at (0, 0.5) {$\bullet$};
      \node[anchor=west] at (-0.1, 0.55) {$P_3$};
\coordinate (P_{5}) at (0.000000, 0.484452);
\node at (P_{5}) {$\bullet$};
       \node at (-0.050000, 0.43) {$P_{5}$};
\node at (P_{6}) {$\bullet$};
       \node at (0.142857, 0.27) {$P_{6}$};
\node at (P_{7}) {$\bullet$};
       \node at (0.550000, 0.43) {$P_{7}$};
\coordinate (P_{8}) at (0.052209, 0.500000);
\node at (P_{8}) {$\bullet$};
       \node at (0.052209, 0.55) {$P_{8}$};
\node at (0.5, 0.357) {$\circ$}; \node at (0.55, 0.3) {$P_c$};
\node (B) at (0.25, 0.4150) {$\times$};
\end{tikzpicture}
\end{center}
\caption{The case $m=7$: step 1 (left) and step 2 (right). \label{real7-step2}}
\end{figure}

Let us consider Figure~\ref{real7-step2}, with
$$P_{4} = \left(0, 3 \sqrt{2}/14 \approx 0.3030\right),$$
which appears naturally as the intersection of $B_{0, 0}^{+-}$ with the
$b$-axis. Points in-between the curves $B_{0, 0}^{+-}$ and
$B_{0, 0}^{--}$ are in $\mathcal{H}$. This gives us our first region:
$$\mathcal{R}_{0, 0} = (0, 1, c, [0, 0, 1, -1], 4, 0),$$
that is $(0, 0)$ covers $\overline{P_0P_1P_c} \cup B_{0, 0}^{+-} \cup
\overline{P_4P_0}$.

We have to find other regions
$\mathcal{H}_{u, v}$ that cover the remaining part of
$\mathcal{S}_0$, that is the region $\widehat{P_4 P_c} \cup
\overline{P_c P_2 P_3 P_4}$.

\medskip
\noindent
{\em Step 2:} First, we add the region $\mathcal{H}_{1, 0}$ that covers the
barycenter (plotted as $\times$) of the parallelogram $P_c P_2 P_3 P_4$, see
Figure~\ref{real7-step2}, which gives us a covering of the region
$\overline{P_5P_4}\cup \widehat{P_4P_6P_7} \cup
\overline{P_7P_2P_8}\cup \widehat{P_8 P_5}$ where
$$P_{5}=B_{1, 0}^{+-} \cap \{x=0\}=\left(0, 1/14\,\sqrt {46}\approx 0.484452\right),$$
$$P_{6}=B_{1, 0}^{++} \cap B_{0, 0}^{+-}=\left(1/7\approx 0.142857, {\frac {\sqrt {910}}{98}}\approx 0.307818\right),$$
$$P_{7}=B_{1, 0}^{++} \cap \{x=1/2\}=\left(1/2, 3/14\,\sqrt {6}\approx 0.479157\right),$$
$$P_{8}=B_{1, 0}^{+-} \cap \{y=1/2\}=\left(-1+1/14\,\sqrt {217}\approx 0.052209, 1/2\right).$$
It turns out that the arc of hyperbola $\widehat{P_4P_6}$ is also
covered. For this, using Lemma~\ref{bfcta} and Remark~\ref{rem1}, we
have to prove that for
$0 \leq a \leq 1/7$
$$B_{1, 0}^{+-} \geq B_{0, 0}^{+-}\text{ for } a \in [0, 1/7], \text{i.e.}, 
2\,a+1 \geq 0;$$
$$B_{0, 0}^{+-} \geq B_{1, 0}^{++}\text{ for } a \in [0, 1/7], \text{i.e.}, 
1-7\,a \geq 0.$$
We need to prove also that the line $P_8P_2$ is covered using
Remark~\ref{rem1}:
$$B_{1, 0}^{+-} \geq 1/2\text{ for } a \in [x(P_8), 1/2], \text{i.e.}, 
28\,{a}^{2}+56\,a-3 \geq 0: \text{case }a_2;$$ 
$$B_{1, 0}^{++} \leq 1/2\text{ for } a \in [x(P_8), 1/2], \text{i.e.}, 
28\,{a}^{2}+56\,a-39 \geq 0: \text{case b}.$$
This gives us the region
$$\mathcal{R}_{1, 0} =
(7, 2, 8, [1, 0, 1, -1], 5, 4, [0, 0, 1, -1], 6, [1, 0, 1, 1], 7).$$

\medskip
\noindent
{\em Step 3:} There remains to cover $\widehat{P_5P_8} \cup
\overline{P_8P_3P_5}$, which
is done by intersecting the
possible coverings of the points, yielding $\{(-4, 1), (-4, -2)\}$. We
select the first one to get Figure~\ref{real7-step3}.
To prove that $\mathcal{H}_{-4, 1}$
covers the arc $\widehat{P_4P_8}$, we need to prove:
$$B_{-4, 1}^{+-} \geq B_{1, 0}^{+-}\text{ for } a \in [0, x(P_8)],$$
In other words: $4 a^2-12 a+1 \geq 0,
\quad
-392 a^2+1036 a+5 \geq 0,
$
for $0 \leq a \leq x(P_8)$, which is true using
Lemma~\ref{signquad}.
We also need to prove covering of $P_3P_8$:
$$B_{-4, 1}^{+-} \geq 1/2\text{ for } a \in [0, x(P_8)], \text{i.e.}, 
28\,{a}^{2}-224\,a+25 \geq 0: \text{case }a_1;$$ 
$$B_{-4, 1}^{++} \leq 1/2\text{ for } a \in [0, x(P_8)], \text{i.e.}, 
28\,{a}^{2}-224\,a-11 \geq 0: \text{case b}.$$ 
The region is
$\mathcal{R}_{-4, 1} = (c, 5, [1, 0, 1, -1], 8, 3)$.

\medskip
We do the same
thing for the arc $\widehat{P_6P_cP_7}$ and find one pair $(-2, -1)$
that we add to Figure~\ref{real7-step3} and we prove first that
$$B_{-2, -1}^{-+} \geq B_{1, 0}^{++}\text{ for } a \in [0, 1/2], \text{i.e.}, 
4\,{a}^{2}-4\,a+3 \geq 0: \text{case -};$$
$$B_{1, 0}^{++} \geq B_{-2, -1}^{--}\text{ for } a \in [0, 1/2], \text{i.e.}, 
-392\,{a}^{2}+1148\,a+129 \geq 0: \text{case b}.$$
We also need
$$B_{0, 0}^{+-} \geq B_{-2, -1}^{--}\text{ for } a \in [0, 1/2], \text{i.e.}, 
4\,{a}^{2}-8\,a+3 \geq 0: \text{case }a_1,$$
which finishes the proof. The last region is
$$\mathcal{R}_{-2, -1} = (c, 7, [1, 0, 1, 1], 6, [0, 0, 1, -1], c).$$

$\Box$
 
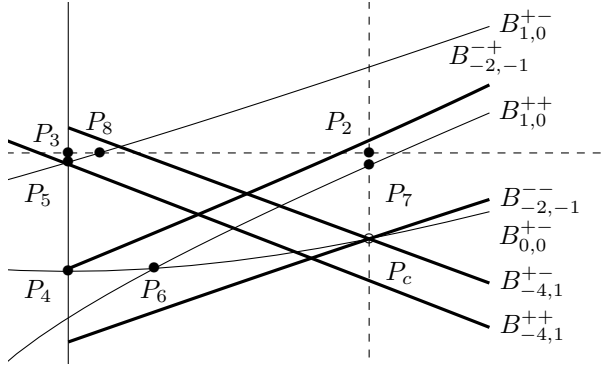
\begin{figure}[hbt]
\begin{center}
\begin{tikzpicture}[>=latex,xscale=8,yscale=8]
\clip (-0.1, 0.15) rectangle (0.9, 0.75);
\draw[->] (-2, 0) -- (2, 0) node[very near end,above] {$a$};
\draw[->] (0, -1) -- (0, 1.5) node[very near end,left] {$b$};
\coordinate (P_{c}) at (0.5, 0.357);

\draw[domain=-1:0.7] plot (\x, {sqrt((\x*\x+0.643)/7)}) node[below right] {$B_{0, 0}^{+-}$};
\draw[domain=-1:0.7] plot (\x, {-sqrt((\x*\x+0.643)/7)}) node[right] {$B_{0, 0}^{-+}$};
\draw[domain=-1.5:-0.802] plot (\x, {sqrt((\x*\x-0.643)/7)});
\draw[domain=-1.5:-0.802] plot (\x, {-sqrt((\x*\x-0.643)/7)});
\draw[domain=0.802:1.5] plot (\x, {sqrt((\x*\x-0.643)/7)});
\draw[domain=0.802:1.5] plot (\x, {-sqrt((\x*\x-0.643)/7)});

\draw[domain=-1:0.7] plot (\x, {sqrt(((\x+1)*(\x+1)+0.643)/7)})
node[right] {$B_{1, 0}^{+-}$};
\draw[domain=-0.198:0.7] plot (\x, {sqrt(((\x+1)*(\x+1)-0.643)/7)})
node[right] {$B_{1, 0}^{++}$};

\draw[domain=0:.7,very thick] plot (\x, {-1+sqrt(((\x+(-4))*(\x+(-4))+0.6429)/7)}) node[right] {$B_{-4, 1}^{+-}$};
\draw[domain=-1:0.7,very thick] plot (\x, {-1+sqrt(((\x+(-4))*(\x+(-4))-0.6429)/7)}) node[right] {$B_{-4, 1}^{++}$};
\draw[domain=-1:0.7,very thick] plot (\x, {-1-sqrt(((\x+(-4))*(\x+(-4))-0.6429)/7)}) node[right] {$B_{-4, 1}^{-+}$};

\draw[domain=0:.7,very thick] plot (\x, {1+sqrt(((\x+(-2))*(\x+(-2))+0.6429)/7)}) node[right] {$B_{-2, -1}^{+-}$};
\draw[domain=0:.7,very thick] plot (\x, {1-sqrt(((\x+(-2))*(\x+(-2))+0.6429)/7)}) node[right] {$B_{-2, -1}^{--}$};
\draw[domain=0:0.7,very thick] plot (\x, {1+sqrt(((\x+(-2))*(\x+(-2))-0.6429)/7)}) node[right] {$B_{-2, -1}^{++}$};
\draw[domain=0:0.7,very thick] plot (\x, {1-sqrt(((\x+(-2))*(\x+(-2))-0.6429)/7)}) node[above] {$B_{-2, -1}^{-+}$};

\draw[dashed] (-1, -1) -- (-1, 1);
\draw[dashed] ( 1, -1) -- ( 1, 1);
\draw[dashed] (-0.5, -1) -- (-0.5, 1);
\draw[dashed] ( 0.5, -1) -- ( 0.5, 1);
\draw[dashed] (-1, -0.5) -- (1, -0.5);
\draw[dashed] (-1, 0.5) -- (1, 0.5);
\node (P_4) at (0, 0.303) {$\bullet$}; \node at (-0.05, 0.27) {$P_4$};
\node (P_2) at (0.5, 0.5) {$\bullet$}; \node at (0.45, 0.55) {$P_2$};
\node (P3) at (0, 0.5) {$\bullet$}; \node[anchor=west] at (-0.075, 0.53) {$P_3$};
\node (P4) at (0, 0.484) {$\bullet$}; \node at (-0.05, 0.43) {$P_5$};
\node (P5) at (0.1428, 0.3078) {$\bullet$}; \node at (0.1428, 0.27) {$P_6$};
\node (P6) at (0.5, 0.479) {$\bullet$}; \node at (0.55, 0.43) {$P_7$};
\node (P7) at (0.0522, 0.5) {$\bullet$}; \node at (0.0522, 0.55) {$P_8$};
\node at (P_{c}) {$\circ$}; \node at (0.55, 0.3) {$P_c$};
\end{tikzpicture}
\end{center}
\caption{The case $m=7$: step 3. \label{real7-step3}}
\end{figure}

\subsection{The case $m=11$}

This case was done in \cite{Perron1933,Oppenheim1934,Remak1934}
and dealt with again in \cite[Theorem 1]{BaSw1952b} (where a sequence
of successive minima is given, together with their critical points and
many properties) and \cite{Varnavides1952}.

For the last two cases, we do not give regions, as they are
numerous. See the author git site for the corresponding data files.
\begin{theorem}
For $m=11$, the square $\mathcal{S}_0$ can be covered by the six pairs
$$(0, 0), (1, 0), (-6, -2), (-5, 1), (5, -2), (25, -8).$$
\end{theorem}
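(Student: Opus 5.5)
The plan follows the scheme used for $m=7$, now with $M = M_1 = 19/22$ and $f_{11}(a,b) = a^2 - 11 b^2$. Two critical points are relevant, and both lie on the edge $a = 1/2$: $P_{c_1} = (1/2, 7/22)$ and $P_{c_2} = (1/2, 15/22)$. The second is outside $\mathcal{S}_0$, but its translates still attract boundaries of the coverings. The first step is to compute $\mathcal{C}(P_{c_1}, 10^2)$ and to check that the six pairs $(0,0), (1,0), (-6,-2), (-5,1), (5,-2), (25,-8)$ include the pairs hitting $P_{c_1}$ exactly on a hyperbola $H^{\pm}_{u,v}$. By the Proposition on critical points, any covering pair of $P_{c_1}$ must satisfy $f_{11}(1/2+u, 7/22+v) = \pm 19/22$. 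This is an exact rational check for each of the six pairs.

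\emph{Step 1.} Take $(0,0)$. It covers the region $\overline{P_0P_1}$, then up to where $B_{0,0}^{+-}$ meets $a = 1/2$ or $P_{c_1}$, then along $B_{0,0}^{+-}$ back to $P_4 = (0, \sqrt{19/242})$. This gives $\mathcal{R}_{0,0}$, exactly as in the cases $m=6,7$.

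\emph{Step 2.} Take $(1,0)$. It covers a band above $B_{0,0}^{+-}$ bounded by $B_{1,0}^{++}$ and $B_{1,0}^{+-}$.

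\emph{Step 3.} In the remaining part of $\mathcal{S}_0$, I would locate the new vertices. These are the intersections of the $B_{u,v}^{\theta,\epsilon}$ of Definition~\ref{Buv} with each other and with the lines $a = 0$, $a = 1/2$, $b = 1/2$. Each such vertex is an exact number: solving $B_{u_1,v_1}^{\theta_1,\epsilon_1}(a) = B_{u_2,v_2}^{\theta_2,\epsilon_2}(a)$ gives a quadratic in $a$ after squaring twice. I then add the remaining pairs, one per leftover sub-region, following Algorithm~\ref{covering}: intersect the sets $\mathcal{C}(P_i, B)$ over the corners $P_i$ of each leftover piece.
\begin{itemize}
\item $(-5,1)$ should cover the strip near $b = 1/2$ and $a$ small, the analogue of $(-4,1)$ for $m=7$.
\item $(-6,-2)$ and $(5,-2)$ should cover the thin slivers near $a = 1/2$ on either side of $P_{c_1}$, the analogue of $(-2,-1)$ for $m=7$.
\item $(25,-8)$ is the large pair. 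It is needed for a tiny leftover sliver adjacent to a point whose minimum is close to $M_1$, presumably near the second minimum listed in \cite{BaSw1952b}.
\end{itemize}

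For each region $\mathcal{R}_{u,v}$, the inclusion is proved by the same pattern of boundary inequalities. Each has the form $B_{u_1,v_1}^{\theta_1,\epsilon_1}(a) \geq B_{u_2,v_2}^{\theta_2,\epsilon_2}(a)$ or $B_{u,v}^{\theta,\epsilon}(a) \gtrless 1/2$ on an interval $[a_{\min}, a_{\max}]$. Lemma~\ref{bfcta} controls monotonicity, so each inequality reduces to finitely many comparisons. Following Remark~\ref{rem1}, I would square away the radicals, obtaining a quadratic polynomial in $a$, whose sign on the interval Lemma~\ref{signquad} decides (cases $-$, $a_1$, $a_2$, $b$). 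The endpoints are exact numbers such as $x(P_8) = -1 + \sqrt{\cdot}/22$. Comparing them with the roots $x_\pm$ uses the sign-determination Proposition for sums of radicals, with the tricks of Remark~\ref{rem2} when three radicals occur.

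The main obstacle is the region near $P_{c_1}$ and the sliver handled by $(25,-8)$. Near $P_{c_1}$, two hyperbola boundaries pass exactly through the critical point, so the covering inequalities become equalities there. A discriminant-zero case or a double root at the endpoint $a = 1/2$ must be handled carefully (case $0$ of Lemma~\ref{signquad}). For $(25,-8)$, $u$ is large, so the curves are nearly straight lines of slope $\pm 1/\sqrt{11}$. The polynomial comparisons then involve large coefficients and roots extremely close to the interval endpoints, and the exact radical sign tests of the Proposition become essential. If a single comparison fails there, I would split the interval at an extra rational point, a barycenter as in Algorithm~\ref{covering}, and redo the tests on each sub-interval.
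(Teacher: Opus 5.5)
\textbf{The six listed pairs do not cover $\mathcal{S}_0$, so your plan cannot be completed.} You commit to exactly those six pairs, but a whole stretch of the top edge stays uncovered. On $b=1/2$:
\begin{itemize}
\item $(1,0)$ covers only $a\ge x(P_8)=-1+\sqrt{913}/22\approx 0.3734$;
\item $(-5,1)$ covers only $a\le 5-\sqrt{11561}/22\approx 0.1126$;
\item $(5,-2)$ covers only $a\le 0.061$;
\item $(0,0)$, $(-6,-2)$ and $(25,-8)$ cover no point of that edge.
\end{itemize}
For instance, at $(1/3,1/2)$ the six values $f_{11}(1/3+u,1/2+v)$ are $-95/36,\,-35/36,\,265/36,\,-107/36,\,133/36,\,829/36$. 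All have absolute value larger than $19/22$, whereas $f_{11}(1/3-2,1/2-1)=1/36$.

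The paper's own proof in fact uses a seventh pair. In Step 3 it notes that $P_3,P_5,P_8$ have no common covering pair. It then adds $(-2,-1)$, which covers the arc $\widehat{P_5P_8}$ of the region left above $B_{1,0}^{+-}$. This is proved via $-968a^2+3476a+1527\ge 0$ and $4a^2-4a+9\ge 0$ on $[0,1/2]$, and it creates the points $P_9$, $P_{10}$. So the list in the statement omits $(-2,-1)$. Your step ``$(-5,1)$ should cover the strip near $b=1/2$ and $a$ small, the analogue of $(-4,1)$ for $m=7$'' is exactly where your argument breaks, and no amount of care with the exact sign tests can repair it.

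\textbf{The roles you assign to the other pairs are also not what happens.} After $(0,0)$ and $(1,0)$, only one leftover piece touches $a=1/2$: the region $\widehat{P_6P_cP_7}$ between $B_{0,0}^{+-}$ and $B_{1,0}^{++}$ for $a\in[4/11,1/2]$. Below $P_c$ that edge is already covered by $(0,0)$, so there are no slivers ``on either side'' of $P_c$, and $(-6,-2)$ alone handles this piece. The pairs $(-5,1)$, $(5,-2)$ and $(25,-8)$ are then used in succession on the corner $\overline{P_9P_3P_{10}}\cup\widehat{P_{10}P_9}$ left over by $(-2,-1)$. That corner lies near $a=0$, $b\in[0.466,0.5]$. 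The last pair, $(25,-8)$, covers a tiny triangle next to $P_{10}=(0,1-\sqrt{138}/22)$, which $(5,-2)$ misses. None of this takes place near $a=1/2$.

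Once $(-2,-1)$ is added and the roles are corrected, your general mechanism is the same as the paper's: exact vertices, boundary comparisons reduced to quadratics by squaring, and sign decisions via Lemma~\ref{signquad}.
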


\medskip
\noindent
{\em Proof:} 
There is one critical point in $\mathcal{S}_0$ which is $P_c = (1/2,
7/22)$. We compute
$$\mathcal{C}(P_c, 10^2) = \{
[-16, -5], [-6, -2], [-1, 0], [0, 0], [5, -2], [15, -5] \}.$$

\begin{figure}[hbt]
\begin{center}
\begin{tikzpicture}[>=latex,xscale=2,yscale=2]
\draw[->] (-1.1, 0) -- (1.1, 0); 
\draw[->] (0, -0.5) -- (0, 1.1); 

\draw[domain=-1:1]
plot (\x, {(0.00000) * \x + (0.00000) + (1.00000) * sqrt((0.09091)*\x*\x+(0.00000)*\x+(0.07851))})
 node[right] {$B_{0, 0}^{+ -}$};

\draw[domain=-1:1]
plot (\x, {(0.00000) * \x + (0.00000) + (-1.00000) * sqrt((0.09091)*\x*\x+(0.00000)*\x+(0.07851))})
 node[right] {$B_{0, 0}^{- -}$};

\draw[dashed] (-1, -0.5) -- (-1, 0.5);
\draw[dashed] ( 1, -0.5) -- ( 1, 0.5);
\draw[dashed] (-0.5, -0.5) -- (-0.5, 0.5);
\draw[dashed] ( 0.5, -0.5) -- ( 0.5, 0.5);
\draw[dashed] (-1, -0.5) -- (1, -0.5);
\draw[dashed] (-1, 0.5) -- (1, 0.5);
\node (P_{4}) at (0, 0.2802) {$\bullet$}; \node at (-0.15, 0.2) {$P_{4}$};
\node (P_0) at (0, 0) {$\bullet$}; \node at (-0.1, -0.1) {$P_0$};
\node (P_1) at (0.5, 0) {$\bullet$}; \node at (0.7, -0.1) {$P_1$};
\node (P_2) at (0.5, 0.5) {$\bullet$}; \node at (0.7, 0.7) {$P_2$};
\node (P_3) at (0, 0.5) {$\bullet$}; \node at (-0.2, 0.7) {$P_3$};
\node (P1) at (0.5, 0.3182) {$\circ$}; \node at (0.7, 0.2) {$P_c$};
\node (B) at (0.25, 0.3996) {$\times$};
\end{tikzpicture}
\begin{tikzpicture}[>=latex,xscale=8,yscale=8]
\clip (-0.1, 0.05) rectangle (0.8, 0.7);
\draw[->] (-2, 0) -- (2, 0); 
\draw[->] (0, -1) -- (0, 1.5); 

\coordinate (P3) at (0.000000, 0.500000);
       \node at (-0.05, 0.55) {$P_{3}$};
\coordinate (P5) at (0.000000, 0.411608);
       \node at (-0.05, 0.45) {$P_{5}$};
\coordinate (P6) at (0.363636, 0.300888);
\node at (P6) {$\bullet$};
       \node at (0.3636, 0.25) {$P_{6}$};
\coordinate (P7) at (0.500000, 0.355011);
\node at (P7) {$\bullet$};
       \node at (0.53, 0.4) {$P_{7}$};
\coordinate (P8) at (0.373450, 0.500000);
\node at (P8) {$\bullet$};
       \node at (0.3734, 0.55) {$P_{8}$};
\coordinate (Pc) at (0.500000, 0.318182);

\draw[fill=lightgray] (P3) -- (P5) -- (P8) -- (P3);
\draw[fill=lightgray] (Pc) -- (P7) -- (P6) -- (Pc);

\node at (P3) {$\bullet$};
\node at (P5) {$\bullet$};
\node at (Pc) {$\circ$};
       \node at (0.54, 0.27) {$P_c$};

\draw[domain=-1:0.6] plot (\x, {sqrt((\x*\x+0.8636)/11)})
 node[right] {$B_{0, 0}^{+-}$};
\draw[domain=-1:0.7] plot (\x, {-sqrt((\x*\x+0.8636)/11)});
\draw[domain=-1.5:-0.930] plot (\x, {sqrt((\x*\x-0.8636)/11)});
 \node[anchor=west] at (1.55, 0.5) {$B_{0, 0}^{+-}$};
\draw[domain=-1.5:-0.930] plot (\x, {-sqrt((\x*\x-0.8636)/11)});
\draw[domain=0.930:1.5] plot (\x, {sqrt((\x*\x-0.8636)/11)});
\draw[domain=0.930:1.5] plot (\x, {-sqrt((\x*\x-0.8636)/11)});

\draw[domain=0:0.6,very thick]
plot (\x, {(0.00000) * \x + (0.00000) + (1.00000) * sqrt((0.09091)*\x*\x+(0.18182)*\x+(0.01240))})
 node[right] {$B_{1, 0}^{+ +}$};

\draw[domain=0:0.6,very thick]
plot (\x, {(0.00000) * \x + (0.00000) + (1.00000) * sqrt((0.09091)*\x*\x+(0.18182)*\x+(0.16942))})
 node[right] {$B_{1, 0}^{+ -}$};

\draw[domain=0:0.6,very thick]
plot (\x, {(0.00000) * \x + (0.00000) + (-1.00000) * sqrt((0.09091)*\x*\x+(0.18182)*\x+(0.01240))})
 node[right] {$B_{1, 0}^{- +}$};

\draw[domain=0:0.6,very thick]
plot (\x, {(0.00000) * \x + (0.00000) + (-1.00000) * sqrt((0.09091)*\x*\x+(0.18182)*\x+(0.16942))})
 node[right] {$B_{1, 0}^{- -}$};

\draw[dashed] ( 0.5, 0) -- ( 0.5, 0.5);
\draw[dashed] (0, 0.5) -- (0.5, 0.5);
\node (P0) at (0, 0.2802) {$\bullet$}; \node at (-0.05, 0.25) {$P_{4}$};
\node (P_2) at (0.5, 0.5) {$\bullet$}; \node at (0.55, 0.5) {$P_2$};
\node (B) at (0.25, 0.3996) {$\times$};
\end{tikzpicture}
\end{center}
\caption{The case $m=11$: step 1 (left), step 2 (right). \label{real11-step2}}
\end{figure}
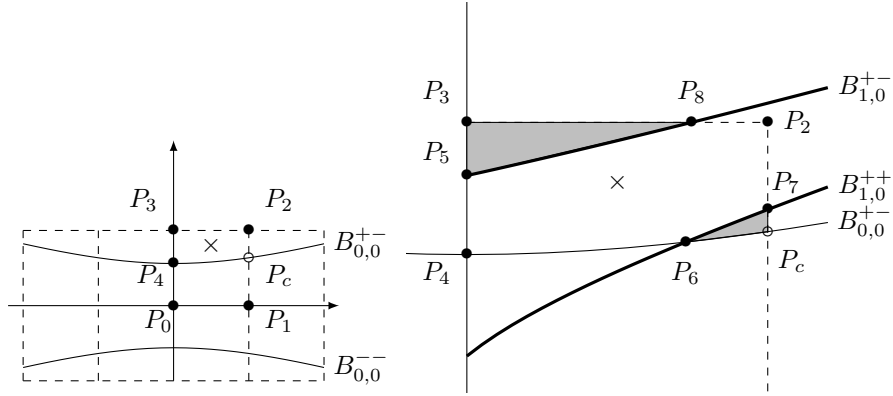

Let us consider Figure~\ref{real11-step2}, with
$P_{4} = (0, 1/22\,\sqrt {38})$.
Points in-between the curves $B_{0, 0}^{+-}$ and
$B_{0, 0}^{--}$ are in $\mathcal{H}$. We have to find other regions
$\mathcal{H}_{u, v}$ that cover the remaining parts of
$\mathcal{S}_0$, namely the region $\widehat{P_{4} P_c} \cup
\overline{P_c P_2 P_3 P_{4}}$.

\medskip
\noindent
{\em Step 2:} 
we add the region $\mathcal{H}_{1, 0}$ that covers the
barycenter of the parallelogram $\overline{P_{4} P_c P_2 P_3}$, see
Figure~\ref{real11-step2}, which gives us a covering of
$\overline{P_{5}P_{4}} \cup \widehat{P_{4}P_{6}P_{7}} \cup \overline{P_{7}P_2P_{8}}
\cup \widehat{P_{8} P_{5}}$ where
$$P_{5} = B_{1, 0}^{+-} \cap \{x=0\} = (0, 1/22\,\sqrt {82}\approx 0.4116),$$
$$P_{6} = B_{1, 0}^{++} \cap B_{0, 0}^{+-}=\left(4/11, {\frac {\sqrt {5302}}{242}}\approx 0.3009\right),$$
$$P_{7} = B_{1, 0}^{++} \cap \{x=1/2\} = \left(1/2, 1/22\,\sqrt
{61}\approx 0.3550\right),$$
$$P_{8} = B_{1, 0}^{+-} \cap \{y=1/2\}=\left(-1+1/22\,\sqrt {913}\approx 0.3734, 1/2\right).$$
We have to prove that the arc of hyperbola $\widehat{P_{4}P_{6}}$ is also
covered. For this, using Lemma~\ref{bfcta} and Remark~\ref{rem1}, we
prove that
$$B_{1, 0}^{+-} \geq B_{0, 0}^{+-}\text{ for } a \in [0, 1/2], \text{i.e.}, 
2\,a+1 \geq 0;$$
$$B_{0, 0}^{+-} \geq B_{1, 0}^{++}\text{ for } a \in [0, x(P_{6})],
\text{i.e.}, 4-11\,a \geq 0.$$
We also need to prove the covering of $P_8P_2$:
$$B_{1, 0}^{+-} \geq 1/2\text{ for } a \in [x(P_8), 1/2], \text{i.e.}, 
44\,{a}^{2}+88\,a-39 \geq 0: \text{case }a_2;$$ 
$$B_{1, 0}^{++} \leq 1/2\text{ for } a \in [x(P_8), 1/2], \text{i.e.}, 
44\,{a}^{2}+88\,a-115 \geq 0: \text{case b}.$$ 

\medskip
\noindent
{\em Step 3:} to cover $\widehat{P_{6}P_cP_{7}}$, we intersect the
possible coverings of the points, yielding $(-6, -2)$ and
Figure~\ref{real11-step3}.
\begin{figure}[hbt]
\begin{center}
\begin{tikzpicture}[>=latex,xscale=14,yscale=14]
\clip (-0.1, 0.1) rectangle (0.8, 0.68);
\draw[->] (-2, 0) -- (2, 0) node[very near end,above] {$a$};
\draw[->] (0, -1) -- (0, 1.5) node[very near end,left] {$b$};

\coordinate (P3) at (0.000000, 0.500000);
\node at (P3) {$\bullet$};
       \node at (-0.05, 0.55) {$P_{3}$};
\coordinate (P5) at (0.000000, 0.411608);
\node at (P5) {$\bullet$};
       \node at (-0.05, 0.4) {$P_{5}$};
\coordinate (P_{9}) at (0.099043, 0.500000);
\node at (P_{9}) {$\bullet$};
       \node at (0.099043, 0.530000) {$P_{9}$};
\coordinate (P_{10}) at (0.000000, 0.466030);
\node at (P_{10}) {$\bullet$};
       \node at (-0.050000, 0.48) {$P_{10}$};

\draw[fill=lightgray] (P3) -- (P_{9}) -- (P_{10}) -- (P3);

\draw[domain=-1:0.7] plot (\x, {sqrt((\x*\x+0.8636)/11)})
 node[below] {$B_{0, 0}^{+-}$};
\draw[domain=-1:0.7] plot (\x, {-sqrt((\x*\x+0.8636)/11)});
\draw[domain=-1.5:-0.930] plot (\x, {sqrt((\x*\x-0.8636)/11)});
 \node[anchor=west] at (1.55, 0.5) {$B_{0, 0}^{+-}$};
\draw[domain=-1.5:-0.930] plot (\x, {-sqrt((\x*\x-0.8636)/11)});
\draw[domain=0.930:1.5] plot (\x, {sqrt((\x*\x-0.8636)/11)});
\draw[domain=0.930:1.5] plot (\x, {-sqrt((\x*\x-0.8636)/11)});

\draw[domain=0:0.7]
plot (\x, {(0.00000) * \x + (0.00000) + (1.00000) * sqrt((0.09091)*\x*\x+(0.18182)*\x+(0.01240))})
 node[right] {$B_{1, 0}^{+ +}$};

\draw[domain=0:0.6]
plot (\x, {(0.00000) * \x + (0.00000) + (1.00000) * sqrt((0.09091)*\x*\x+(0.18182)*\x+(0.16942))})
 node[right] {$B_{1, 0}^{+ -}$};

\draw[domain=0:0.6]
plot (\x, {(0.00000) * \x + (0.00000) + (-1.00000) * sqrt((0.09091)*\x*\x+(0.18182)*\x+(0.01240))})
 node[right] {$B_{1, 0}^{- +}$};

\draw[domain=0:0.6]
plot (\x, {(0.00000) * \x + (0.00000) + (-1.00000) * sqrt((0.09091)*\x*\x+(0.18182)*\x+(0.16942))})
 node[right] {$B_{1, 0}^{- -}$};

\draw[domain=-.1:.6,very thick]
plot (\x, {(0.00000) * \x + (2.00000) + (-1.00000) * sqrt((0.09091)*\x*\x+(-1.09091)*\x+(3.19421))})
 node[above] {$B_{-6, -2}^{- +}$};

\draw[domain=-.1:.7,very thick]
plot (\x, {(0.00000) * \x + (2.00000) + (-1.00000) * sqrt((0.09091)*\x*\x+(-1.09091)*\x+(3.35124))})
 node[right] {$B_{-6, -2}^{- -}$};

\draw[domain=-.1:.5,very thick]
plot (\x, {(0.00000) * \x + (1.00000) + (-1.00000) * sqrt((0.09091)*\x*\x+(-0.36364)*\x+(0.28512))})
 node[right] {$B_{-2, -1}^{- +}$};

\draw[domain=-.1:.6,very thick]
plot (\x, {(0.00000) * \x + (1.00000) + (-1.00000) * sqrt((0.09091)*\x*\x+(-0.36364)*\x+(0.44215))})
 node[right] {$B_{-2, -1}^{- -}$};

\draw[dashed] ( 0.5, 0) -- ( 0.5, 0.5);
\draw[dashed] (-1, -0.5) -- (1, -0.5);
\draw[dashed] (0, 0.5) -- (0.5, 0.5);
\node (P0) at (0, 0.2802) {$\bullet$}; \node at (-0.05, 0.25) {$P_{4}$};
\node (P_2) at (0.5, 0.5) {$\bullet$};
      \node at (0.53, 0.5) {$P_2$};
\node (P1) at (0.5, 0.3182) {$\circ$}; \node at (0.53, 0.28) {$P_c$};
\node (P_{6}) at (0.3636, 0.3009) {$\bullet$}; \node at (0.3636, 0.25) {$P_{6}$};
\node (P_{7}) at (0.5000, 0.3550) {$\bullet$};
        \node at (0.53, 0.35) {$P_{7}$};
\node (P_{8}) at (0.3734, 0.5000) {$\bullet$}; \node at (0.3734, 0.55) {$P_{8}$};
\node (B_2) at (0.1245, 0.4705) {$\times$}; 
\end{tikzpicture}
\end{center}
\caption{The case $m=11$: step 3. \label{real11-step3}}
\end{figure}

We need to prove
$$B_{-6, -2}^{-+} \geq B_{1, 0}^{++}\text{ for } a \in [x(P_6), 1/2],
\text{i.e.}, 20\,{a}^{2}-100\,a+57 \geq 0: \text{case }a_1;$$
$$B_{1, 0}^{++} \geq B_{-6, -2}^{--}\text{ for } a \in [x(P_6), 1/2],
\text{i.e.}, -605\,{a}^{2}+4488\,a-874 \geq 0; \text{case b}.$$

For $P_3$, $P_{5}$, $P_{8}$, there are no common pairs, but $(-2, -1)$
covers $\widehat{P_{5}P_{8}}$
that we add to Figure~\ref{real11-step3}. This leaves with two new
points
$$P_{9}=B_{-2, -1}^{-+} \cap \{y=1/2\}=\left(2-1/22\,\sqrt {1749}\approx 0.099043, 1/2\right),$$
$$P_{10}=B_{-2, -1}^{-+} \cap \{x=0\}=\left(0, 1-1/22\,\sqrt {138}\approx 0.466030\right).$$
We need to prove
$$B_{-2, -1}^{-+} \geq B_{1, 0}^{+-}\text{ for } a \in [0, 1/2], \text{i.e.}, 
-968\,{a}^{2}+3476\,a+1527 \geq 0: \text{ case b};$$
$$B_{1, 0}^{+-} \geq B_{-2, -1}^{--}\text{ for } a \in [0, 1/2], \text{i.e.}, 
4\,{a}^{2}-4\,a+9 \geq 0: \text{case }-.$$

\medskip
\noindent
{\em Step 4:} we are left with covering $\overline{P_{9}P_3P_{10}} \cup
\widehat{P_{10}P_{9}}$.

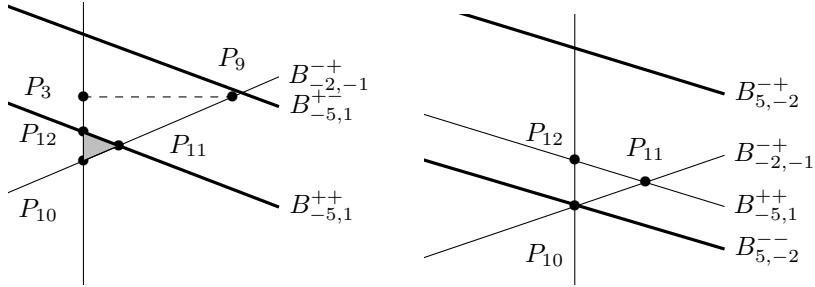
\begin{figure}[hbt]
\begin{center}
\begin{tikzpicture}[>=latex,xscale=20,yscale=25]
\clip (-0.05, 0.4) rectangle (0.2, 0.55);
\draw[->] (0, -1) -- (0, 1.5) node[very near end,left] {$b$};

\coordinate (P_{10}) at (0.000000, 0.466030);
\node at (P_{10}) {$\bullet$};
       \node at (-0.03, 0.44) {$P_{10}$};
\coordinate (P_{11}) at (0.023549, 0.474062);
\node at (P_{11}) {$\bullet$};
       \node at (0.07, 0.4741) {$P_{11}$};
\coordinate (P_{12}) at (0.000000, 0.481288);
\node at (P_{12}) {$\bullet$};
       \node at (-0.03, 0.48) {$P_{12}$};

\draw[fill=lightgray] (P_{12}) -- (P_{11}) -- (P_{10}) -- (P_{12});

\draw[domain=-.1:0.13]
plot (\x, {(0.00000) * \x + (1.00000) + (-1.00000) * sqrt((0.09091)*\x*\x+(-0.36364)*\x+(0.28512))})
 node[right] {$B_{-2, -1}^{- +}$};

\draw[domain=-.1:.13,very thick]
plot (\x, {(0.00000) * \x + (-1.00000) + (1.00000) * sqrt((0.09091)*\x*\x+(-0.90909)*\x+(2.19421))})
 node[right] {$B_{-5, 1}^{+ +}$};

\draw[domain=-.1:.13,very thick]
plot (\x, {(0.00000) * \x + (-1.00000) + (1.00000) * sqrt((0.09091)*\x*\x+(-0.90909)*\x+(2.35124))})
 node[right] {$B_{-5, 1}^{+ -}$};

\draw[domain=-.1:.13,very thick]
plot (\x, {(0.00000) * \x + (-1.00000) + (-1.00000) * sqrt((0.09091)*\x*\x+(-0.90909)*\x+(2.19421))})
 node[right] {$B_{-5, 1}^{- +}$};

\draw[domain=-.1:.13,very thick]
plot (\x, {(0.00000) * \x + (-1.00000) + (-1.00000) * sqrt((0.09091)*\x*\x+(-0.90909)*\x+(2.35124))})
 node[right] {$B_{-5, 1}^{- -}$};

\draw[dashed] (0, 0.5) -- (0.1, 0.5);
\node (P_3) at (0, 0.5) {$\bullet$}; \node at (-0.03, 0.505) {$P_3$};
\node (P_{9}) at (0.0990, 0.5000) {$\bullet$};
        \node at (0.0990, 0.52) {$P_{9}$};

\end{tikzpicture}
\hspace*{3mm}
\begin{tikzpicture}[>=latex,xscale=40,yscale=40]
\clip (-0.05, 0.44) rectangle (0.1, 0.53); 
\draw[->] (0, -1) -- (0, 1.5) node[very near end,left] {$b$};

\draw[domain=-.1:0.05] plot (\x, {1-sqrt(((\x+(-2))*(\x+(-2))-0.8636)/11)}) node[right] {$B_{-2, -1}^{-+}$};

\draw[domain=-.1:.05]
plot (\x, {(0.00000) * \x + (-1.00000) + (1.00000) * sqrt((0.09091)*\x*\x+(-0.90909)*\x+(2.19421))})
 node[right] {$B_{-5, 1}^{+ +}$};

\draw[domain=-.1:.05,very thick]
plot (\x, {(0.00000) * \x + (2.00000) + (1.00000) * sqrt((0.09091)*\x*\x+(0.90909)*\x+(2.19421))})
 node[right] {$B_{5, -2}^{+ +}$};

\draw[domain=-.1:.05,very thick]
plot (\x, {(0.00000) * \x + (2.00000) + (1.00000) * sqrt((0.09091)*\x*\x+(0.90909)*\x+(2.35124))})
 node[right] {$B_{5, -2}^{+ -}$};

\draw[domain=-.1:.05,very thick]
plot (\x, {(0.00000) * \x + (2.00000) + (-1.00000) * sqrt((0.09091)*\x*\x+(0.90909)*\x+(2.19421))})
 node[right] {$B_{5, -2}^{- +}$};

\draw[domain=-.1:.05,very thick]
plot (\x, {(0.00000) * \x + (2.00000) + (-1.00000) * sqrt((0.09091)*\x*\x+(0.90909)*\x+(2.35124))})
 node[right] {$B_{5, -2}^{- -}$};

\node (P_{10}) at (0.0000, 0.4660) {$\bullet$};
         \node at (-0.01, 0.45) {$P_{10}$};
\node (P_{11}) at (0.0235, 0.4741) {$\bullet$};
         \node at (0.0235, 0.485) {$P_{11}$};
\node (P_{12}) at (0.0000, 0.4813) {$\bullet$};
         \node at (-0.01, 0.49) {$P_{12}$};
\end{tikzpicture}
\end{center}
\caption{The case $m=11$: step 4 (left), step 5 (right). \label{real11-step5}}
\end{figure}
We can use $(-5, 1)$ and we find two new points
$$P_{11}=B_{-5, 1}^{++} \cap B_{-2, -1}^{-+} = \left(7/2-{\frac
{3\,\sqrt {1645}}{35}}\approx
0.0235,{\frac {9\,\sqrt {1645}}{770}}\approx 0.4741\right),$$
$$P_{12} = B_{-5, 1}^{++} \cap \{x=0\} = \left(0, -1+{\frac {3\,\sqrt
{118}}{22}}\approx 0.4813\right),$$
see Figure~\ref{real11-step5}. 
First, we prove that they cover $\widehat{P_{11} P_{9}}$ with
$$B_{-5, 1}^{+-} \geq B_{-2, -1}^{-+}\text{ for } a \in [x(P_{11}, x(P_{9})], \text{i.e.}, 
4235\,{a}^{2}-29018\,a+3009 \geq 0: \text{case }a_1;$$
$$B_{-2, -1}^{-+} \geq B_{-5, 1}^{++}\text{ for } a \in [x(P_{11}, x(P_{9})], \text{i.e.}, 
-140\,{a}^{2}+980\,a-23 \geq 0: \text{case b}.$$
For $P_3P_9$, we get
$$B_{-5, 1}^{+-} \geq 1/2\text{ for } a \in [0, x(P_9)], \text{i.e.}, 
44\,{a}^{2}-440\,a+49 \geq 0: \text{case }a_1;$$ 
$$B_{-5, 1}^{++} \leq 1/2\text{ for } a \in [0, x(P_9)], \text{i.e.}, 
44\,{a}^{2}-440\,a-27 \geq 0: \text{case b}.$$ 

\medskip
\noindent
{\em Step 5:} we are left with the region $\widehat{P_{10}P_{11}}
\overline{P_{11}P_{12} P_{10}}$.

We decide to cover $P_{11}$ and $P_{12}$ (again) with $(5, -2)$,
see Figure~\ref{real11-step5}. This
leaves a very tiny region not covered: $P_{10}P_{13}P_{14}$ (note $P_{10}$
is {\em not covered}) with
$$P_{13} = B_{5, -2}^{--} \cap \{x=0\}=\left(0, 2-1/22\,\sqrt {1138}\approx 0.4666\right),$$
$$P_{14} = B_{5, -2}^{--} \cap B_{-2, -1}^{-+}=\left(-{\frac{73}{44}}+1/44\,\sqrt {5335}\approx
0.0009,{\frac{67}{44}}-{\frac {7\,\sqrt {5335}}{484}}\approx 0.4663\right).$$
To prove the covering, we need
$$B_{5, -2}^{-+} \geq B_{-5, 1}^{++}\text{ for } a \in [0, x(P_{11})], \text{i.e.}, 
4\,{a}^{2}+243 \geq 0: \text{case }-;$$
$$B_{-5, 1}^{++} \geq B_{5, -2}^{--}\text{ for } a \in [0, x(P_{11})], \text{i.e.}, 
-242\,{a}^{2}-4180\,a+5809 \geq 0: \text{case b}.$$

\medskip
\noindent
{\em Step 6:} We need cover the region
$\widehat{P_{10}P_{14}P_{13}}\overline{P_{13}P_{10}}$, see
Figure~\ref{real11-step6}.
\begin{figure}[hbt]
\begin{center}
\iftrue
\begin{tikzpicture}[>=latex,xscale=300,yscale=300]
\clip (-0.005, 0.455) rectangle (0.01, 0.47);
\else
\begin{tikzpicture}[>=latex,xscale=200,yscale=200]
\clip (-0.01, 0.45) rectangle (0.015, 0.48);
\fi
\draw[->] (0, 0.45) -- (0, 0.5);

\draw[domain=-0.1:0.005] plot (\x, {1-sqrt(((\x+(-2))*(\x+(-2))-0.8636)/11)}) node[right] {$B_{-2, -1}^{-+}$};


\draw[domain=-0.1:0.005] plot (\x, {2-sqrt(((\x+(5))*(\x+(5))+0.8636)/11)}) node[below] {$B_{5, -2}^{--}$};

\node (P_{13}) at (0.0000, 0.4666) {$\bullet$};
         \node at (-0.0025, 0.4666) {$P_{13}$};
\node (P_{14}) at (0.0009, 0.4663) {$\bullet$}; \node at (0.001, 0.465) {$P_{14}$};

\node (P_{10}) at (0.0000, 0.4660) {$\bullet$};
         \node at (-0.0015, 0.4645) {$P_{10}$};

\draw[domain=-.1:.005,very thick]
plot (\x, {(0.00000) * \x + (8.00000) + (-1.00000) * sqrt((0.09091)*\x*\x+(4.54545)*\x+(56.73967))})
 node[right] {$B_{25, -8}^{- +}$};

\draw[domain=-.1:.005,very thick]
plot (\x, {(0.00000) * \x + (8.00000) + (-1.00000) * sqrt((0.09091)*\x*\x+(4.54545)*\x+(56.89669))})
 node[right] {$B_{25, -8}^{- -}$};

\end{tikzpicture}
\end{center}
\caption{The case $m=11$: step 6. \label{real11-step6}}
\end{figure}
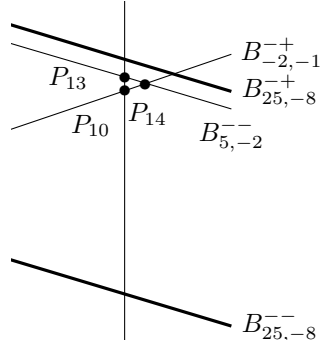
We may use $(25, -8)$, as found by computations and need to prove
$$B_{25, -8}^{-+} \geq B_{5, -2}^{--}\text{ for } a \in [0, x(P_{14})], \text{i.e.}, 
-1936\,{a}^{2}-41360\,a+6503 \geq 0: \text{ case b};$$
$$B_{5, -2}^{-+} \geq B_{25, -8}^{--}\text{ for } a \in [0, x(P_{14})], \text{i.e.}, 
1936\,{a}^{2}+74800\,a+495097 \geq 0: \text{case }a_2.$$
This ends the proof of the theorem. $\Box$

\subsection{The case $m=19$}

Berg~\cite{Berg1935} gave the minimum by giving a list of inequalities
(without proof); see also \cite{Varnavides1952}. We may anticipate
a harder case.

\begin{theorem}
For $m=19$, the square $\mathcal{S}_0$ can be covered by the pairs
$$(0, 0), (1, 0), (-2, 0), (2, -1), (-7, 1), (-3, -1), (7, -2), (-6,
1), (991, 227),$$
$$(-19, 4), (-80, 18), (-430, -99), (90, -21).$$
\end{theorem}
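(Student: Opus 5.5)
We follow the scheme used for $m=7$ and $m=11$, with $M = M_1 = 170/171$. We run Algorithm~\ref{covering} on $\mathcal{S}_0$, starting from the four corners $P_0,\dots,P_3$. The two critical points in $\mathcal{S}_0$ are $P_{c_1}=(0,20/57)$ and $P_{c_2}=(0,37/57)$; only $P_{c_1}$ lies in $\mathcal{S}_0$, since $37/57>1/2$. We first compute $\mathcal{C}(P_{c_1},10^2)$ and $\mathcal{C}(P_{c_1},10^3)$. By the proposition on critical points, every pair in these sets satisfies $f_{19}(u, 20/57+v)=\pm M$. The covering must therefore meet $P_{c_1}$ on the boundary hyperbolas $B_{u,v}^{\theta,\epsilon}$. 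This is why we expect many small regions to accumulate near the $b$-axis around $b\approx 0.35$, and why large pairs such as $(991,227)$ and $(-430,-99)$ appear.

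\emph{Step 1.} The pair $(0,0)$ covers the region below $B_{0,0}^{+-}$, namely $\overline{P_0P_1}$, the segment of $\{a=1/2\}$ up to $B_{0,0}^{+-}(1/2)$, the arc $B_{0,0}^{+-}$, and the point $P_4=B_{0,0}^{+-}\cap\{a=0\}=(0,\sqrt{170/3249})$. We then add $(1,0)$, which covers the barycenter of the remaining quadrilateral. Its intersections with $B_{0,0}^{+-}$, with $\{a=0\}$, $\{a=1/2\}$ and $\{b=1/2\}$ give the next points $P_5,\dots$, all of them exact numbers. Next come $(-2,0)$ and $(2,-1)$, which handle the part near $a=1/2$ and the top edge, in the same way as $(-2,-1)$ and $(-4,1)$ did for $m=7$. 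At each step we certify the region with inequalities of the form $B_{u_1,v_1}^{\theta_1,\epsilon_1}\ge B_{u_2,v_2}^{\theta_2,\epsilon_2}$ on an interval $[a_{\min},a_{\max}]$, or $B\gtrless 1/2$. Each of these is reduced by squaring, using Lemma~\ref{bfcta} for monotonicity and Remark~\ref{rem1}, to the sign of a rational quadratic. That sign is decided by Lemma~\ref{signquad}. The endpoints $x(P_i)$ are compared with the roots $x_\pm$ using the sign procedure for exact numbers of Section~\ref{sct-tools}.

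\emph{Step 2.} We iterate on the uncovered remainder: the curvilinear triangles near the $b$-axis around $P_{c_1}$ and near the corner $P_3$. For each remaining vertex we compute $\mathcal{C}(P_i,B)$, intersect these sets, and choose a common pair when one exists. This is how we expect $(-7,1)$, $(-3,-1)$, $(7,-2)$ and $(-6,1)$ to enter. When no common pair exists, we cover the barycenter and split the region. Each new pair introduces intersection points of two hyperbolas. Such a point is computed exactly by solving the linear equation obtained by subtracting the two norm equations, then a quadratic, so all coordinates remain in nested quadratic radicals.

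\emph{Main obstacle.} The hard part is the final, very thin slivers that accumulate at $P_{c_1}$, which cannot itself be strictly interior to any region. These are handled by the large pairs $(991,227)$, $(-19,4)$, $(-80,18)$, $(-430,-99)$ and $(90,-21)$. They come from units and near-units of $\Z[\sqrt{19}]$: the fundamental unit is $170+39\sqrt{19}$, and each such pair gives a hyperbola almost tangent to $B_{0,0}^{+-}$ near $P_{c_1}$. Two things make this step difficult. First, the tiny regions must be found with a sufficiently large search bound $B$. Second, the sign certificates now involve nested radicals with huge coefficients, where case $t=3$ of the sign proposition and the tricks of Remark~\ref{rem2} may be needed. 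If a certificate fails, I would first subdivide the $a$-interval at an exact rational point, then retry with the next candidate pair from $\mathcal{C}(p,10^3)$. Once every region's boundary inequalities are certified, the listed pairs cover $\mathcal{S}_0$, and the theorem follows by the symmetry reduction from $\mathcal{S}$ to $\mathcal{S}_0$.
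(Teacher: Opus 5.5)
\textbf{There is a genuine gap: your proposal describes a method but never carries it out, so it is a strategy, not a proof.} The certification machinery you describe is right: reduce each comparison $B_{u_1,v_1}^{\theta_1,\epsilon_1}\ge B_{u_2,v_2}^{\theta_2,\epsilon_2}$ to the sign of a rational quadratic on an interval with exact endpoints. But the statement concerns one specific list of thirteen pairs. Its whole content is the explicit verification: the regions, their vertices as exact numbers, and one sign certificate per boundary comparison.

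You only say you ``expect'' $(-7,1),(-3,-1),(7,-2),(-6,1)$ to appear, and that if a certificate fails you would subdivide or retry with another candidate pair. That fallback could end with a different list, and the covering algorithm is not known to terminate. So nothing you write shows that these thirteen pairs suffice.

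Where your outline does commit to details, it is inaccurate:
\begin{itemize}
\item After $(0,0)$ and $(1,0)$, a sliver $P_5P_7P_8$ remains: $a\in[169/342,1/2]$, between $B_{0,0}^{+-}$ and $B_{1,0}^{++}$. Neither $(-2,0)$ nor $(2,-1)$ covers it; on $a=1/2$, $(-2,0)$ reaches down only to $P_7$. The paper treats it separately with $(5,1)$, a pair absent from the stated list. The listed pair $(-6,1)$, which agrees with $(5,1)$ on $a=1/2$, also covers it.
\item $B_{0,0}^{+-}$ meets the $b$-axis at $\sqrt{170}/57\approx 0.229$, far from $P_c=(0,20/57)$. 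So the large-pair hyperbolas are not ``almost tangent'' to it. Near $P_c$ they are almost straight lines of slope about $\pm 1/\sqrt{19}$, and they must be compared with $B_{-3,-1}^{-+}$, $B_{-6,1}^{++}$ and with one another.
\end{itemize}

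What must be exhibited is the actual chain, as the paper does:
\begin{itemize}
\item After $(-2,0)$, the paper isolates $\mathcal{R}_2=\overline{P_9P_2P_{10}}\widehat{P_{10}P_9}$, closed by $(2,-1)$ and then $(-7,1)$.
\item It also isolates $\mathcal{R}_3=\overline{P_{11}P_cP_6}\widehat{P_6P_{12}}$. There, $(-3,-1)$, $(7,-2)$, $(-6,1)$, then $(991,227)$ with $(-19,4)$, then $(-80,18)$ with $(-430,-99)$, and finally $(90,-21)$ each shrink the uncovered part to a smaller explicit sliver. Every step comes with its own quadratic certificates.
\end{itemize}

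The crux is at $P_c$, which is covered only on boundaries. Both $B_{-3,-1}^{-+}$ and $B_{991,227}^{++}$ pass through $(0,20/57)$, and you must check that no gap opens between them. The paper gets this from $a^2+988a\ge 0$ on $[0,x(P_{19})]$, which vanishes exactly at $a=0$. The last sliver $P_{23}P_{26}P_{25}$ is closed by $(90,-21)$ with four more certificates, for instance $229a^2-77860a+399\ge 0$ on $[0,x(P_{25})]$. Without these explicit regions and certificates, the claim that the listed pairs cover $\mathcal{S}_0$ remains unverified.
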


There is one exceptional point $P_c = (0, 20/57)$, which is weakly
covered:
$$C(P_c, 10^3) = \{[-991, 227], [-29, -7], [-3, -1], [3, -1], [29,
-7], [991, 227]\}.$$
The first points of interest are:
$$P_4 = \left(0, {\frac {\sqrt {170}}{57}} \approx 0.2287\right),
P_5 = \left(1/2, {\frac {\sqrt {851}}{114}} \approx 0.2559\right)$$
where $P_4$ and $P_5$ are intersection points with $B_{0, 0}^{+, -}$,
see Figure~\ref{real19-step2}.
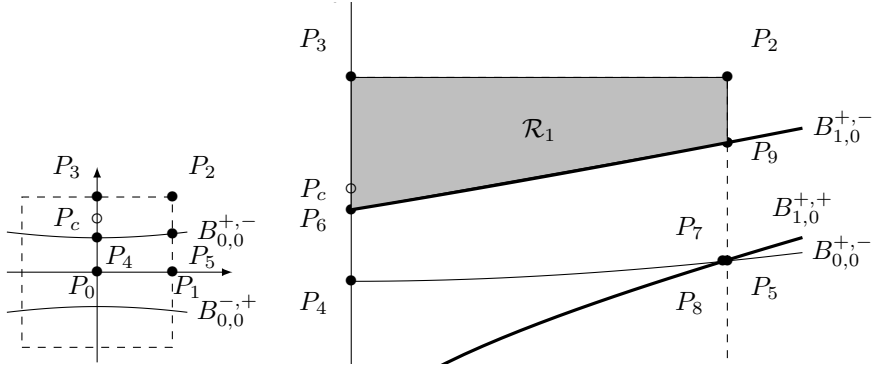
\begin{figure}[hbt]
\begin{tikzpicture}[>=latex,xscale=2,yscale=2]
\draw[->] (-0.6, 0) -- (0.9, 0); 
\draw[->] (0, -0.6) -- (0, 0.7); 
\draw[domain=-0.6:0.6] plot (\x, {sqrt((\x*\x+0.994)/19)}) node[right] {$B_{0, 0}^{+, -}$};
\draw[domain=-0.6:0.6] plot (\x, {-sqrt((\x*\x+0.994)/19)}) node[right] {$B_{0, 0}^{-, +}$};
\draw[dashed] (-0.5, -0.5) rectangle (0.5, 0.5);
\node (P0) at (0, 0) {$\bullet$};
     \node at (-0.1, -0.1) {$P_{0}$};
\node (P1) at (1/2, 0) {$\bullet$};
     \node at (0.6, -0.1) {$P_{1}$};
\node (P_2) at (0.5, 0.5) {$\bullet$}; \node at (0.7, 0.7) {$P_2$};
\node (P_3) at (0, 0.5) {$\bullet$}; \node at (-0.2, 0.7) {$P_3$};
\node (P_4) at (0, 0.229) {$\bullet$}; \node at (0.15, 0.1) {$P_4$};
\node at (0.5, 0.256) {$\bullet$}; \node at (0.7, 0.1) {$P_5$};
\node (P_c) at (0.0000, 0.3509) {$\circ$}; \node at (-0.2, 0.3509) {$P_c$};
\end{tikzpicture}
\begin{tikzpicture}[>=latex,xscale=10,yscale=10]
\clip (-0.1, 0.12) rectangle (0.7, 0.6);
\draw[->] (0, 0) -- (0.7, 0) node[very near end,above] {$a$};
\draw[->] (0, 0) -- (0, 0.7) node[very near end,left] {$b$};
\draw[domain=0:0.6] plot (\x, {sqrt((\x*\x+0.994)/19)}) node[right]
{$B_{0, 0}^{+, -}$};
\node (P_4) at (0, 0.229) {$\bullet$}; \node at (-0.05, 0.2) {$P_4$};
\coordinate (P2) at (0.500000, 0.500000);
\coordinate (P3) at (0.000000, 0.500000);
\node at (0.5, 0.256) {$\bullet$}; \node at (0.55, 0.22) {$P_5$};
\coordinate (P_{6}) at (0.000000, 0.323968); \node at (P_{6}) {$\bullet$};
\coordinate (P_{7}) at (0.500000, 0.413213); \node at (P_{7}) {$\bullet$};
\coordinate (P_{8}) at (0.494152, 0.255295); \node at (P_{8}) {$\bullet$};
\coordinate (P_{9}) at (0.500000, 0.413213); \node at (P_{9}) {$\bullet$};

\draw[fill=lightgray] (P2) -- (P3) -- (P_{6}) -- (P_{9}) -- (P2);

\node (P_c) at (0.0000, 0.3509) {$\circ$}; \node at (-0.05, 0.3509) {$P_c$};
 \node at (P2) {$\bullet$}; \node at (0.55, 0.55) {$P_2$};
 \node at (P3) {$\bullet$}; \node at (-0.05, 0.55) {$P_3$};
 \node at (0.25, 0.43) {$\mathcal{R}_1$};

\draw[domain=0:0.6,very thick] plot (\x,
{0+sqrt(((\x+(1))*(\x+(1))+0.9942)/19)}) node[right] {$B_{1, 0}^{+, -}$};
\draw[domain=0:0.6,very thick] plot (\x,
{0+sqrt(((\x+(1))*(\x+(1))-0.9942)/19)}) node[above] {$B_{1, 0}^{+, +}$};
    \node at (-0.05, 0.310) {$P_{6}$};
     \node at (0.45, 0.3) {$P_{7}$};
     \node at (0.45, 0.2) {$P_{8}$};
     \node at (0.55, 0.4) {$P_{9}$};

\draw[dashed] ( 0.5, 0) -- ( 0.5, 0.5);
\draw[dashed] (0, 0.5) -- (0.5, 0.5);
\end{tikzpicture}
\caption{The case $m=19$: step 1 (left), step 2 (right). \label{real19-step2}}
\end{figure}

\noindent
{\em Step 2:}
We decide to use $(1, 0)$ to cover some more part of $\mathcal{S}_0$
including $P_4$, see Figure~\ref{real19-step2}.
This creates new points
$$P_6 = B_{1, 0}^{+-} \cap \{x=0\}=\left(0,{\frac {\sqrt {341}}{57}}\approx 0.3240\right),$$
$$P_7 = B_{1, 0}^{++} \cap \{x=1/2\}=\left(1/2,{\frac {\sqrt {859}}{114}}\approx 0.2571\right),$$
$$P_8 = B_{1, 0}^{++} \cap B_{0, 0}^{+-}=\left({\frac{169}{342}}\approx 0.4942,{\frac {\sqrt
{2751979}}{6498}}\approx 0.2553\right),$$
$$P_9 = B_{1, 0}^{+-} \cap \{x=1/2\}=\left(1/2, {\frac {\sqrt {2219}}{114}}\approx 0.4132\right)$$
We need to prove that
$$B_{1, 0}^{+, -} \geq B_{0, 0}^{+, -} \geq B_{1, 0}^{+, +}$$
for $a \in [0, x(P_8)]$, that is
$$2 a+1 \geq 0, 169-342 a \geq 0.$$
We need to cover the region $\mathcal{R}_1 = P_2P_3P_6P_9$ and the very
tiny region $P_5P_7P_8$. The latter
is coverable by $(5, 1)$, see Figure~\ref{real19-step3}. We have to prove
$$B_{5, 1}^{+-} \geq B_{1, 0}^{++}\text{ for } a \in [0, 1/2], \text{i.e.}, 
-350892\,{a}^{2}-1175112\,a+1415029 \geq 0: \text{case b};$$ 
$$B_{5, 1}^{+-} \geq B_{0, 0}^{+-}\text{ for } a \in [1/3, 1/2], \text{i.e.}, 
54\,{a}^{2}+270\,a-89 \geq 0: \text{case }a_2;$$ 
$$B_{0, 0}^{+-} \geq B_{5, 1}^{++}\text{ for } a \in [1/3, 1/2], \text{i.e.}, 
-175446\,{a}^{2}-586530\,a+434681 \geq 0: \text{case b}.$$ 
(The value $1/3$ is arbitrary and is enough for our proof.)

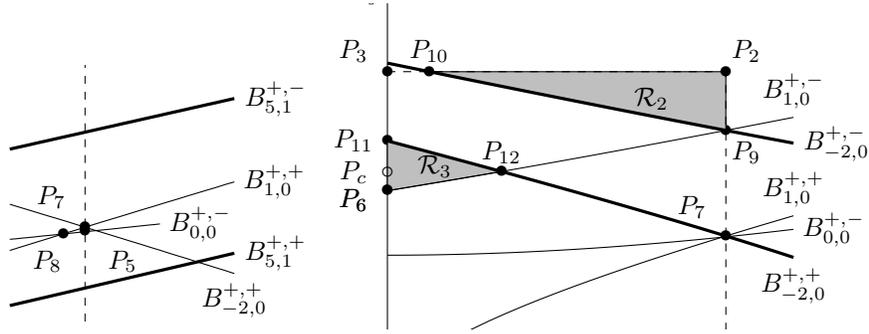
\begin{figure}[hbt]
\begin{tikzpicture}[>=latex,xscale=50,yscale=50]
\clip (0.475, 0.23) rectangle (0.56, 0.3);
\draw[->] (0, 0) -- (0.7, 0) node[very near end,above] {$a$};
\draw[->] (0, 0) -- (0, 0.7) node[very near end,left] {$b$};
\node (P_c) at (0.0000, 0.3509) {$\circ$}; \node at (-0.05, 0.3509) {$P_c$};
\node at (0.5, 0.256) {$\bullet$}; \node at (0.51, 0.248) {$P_5$};
\draw[domain=0.48:0.52] plot (\x, {sqrt((\x*\x+0.994)/19)}) node[right]
{$B_{0, 0}^{+, -}$};

\node (P_{7}) at (0.5000, 0.2571) {$\bullet$}; \node at (0.491, 0.265) {$P_{7}$};
\draw[domain=0.48:0.54] plot (\x,
{0+sqrt(((\x+(-2))*(\x+(-2))-0.9942)/19)}) node[below] {$B_{-2, 0}^{+,
+}$};

\draw[domain=0.48:0.54] plot (\x, {0+sqrt(((\x+(1))*(\x+(1))-0.9942)/19)}) node[right] {$B_{1, 0}^{+,+}$};
\node (P_{8}) at (0.4942, 0.2553) {$\bullet$}; \node at (0.49, 0.248) {$P_{8}$};

\draw[domain=0.48:0.54,very thick] plot (\x,
{-1+sqrt(((\x+(5))*(\x+(5))+0.9942)/19)}) node[right] {$B_{5, 1}^{+, -}$};
\draw[domain=0.48:0.54,very thick] plot (\x,
{-1+sqrt(((\x+(5))*(\x+(5))-0.9942)/19)}) node[right] {$B_{5, 1}^{+, +}$};

\draw[dashed] ( 0.5, 0) -- ( 0.5, 0.5);
\draw[dashed] (0, 0.5) -- (0.5, 0.5);
\end{tikzpicture}
\begin{tikzpicture}[>=latex,xscale=9,yscale=9]
\clip (-0.1, 0.12) rectangle (0.8, 0.6);
\draw[->] (0, 0) -- (0.7, 0) node[very near end,above] {$a$};
\draw[->] (0, 0) -- (0, 0.7) node[very near end,left] {$b$};
\draw[domain=0:0.6] plot (\x, {sqrt((\x*\x+0.994)/19)}) node[right]
{$B_{0, 0}^{+, -}$};
\coordinate (P2) at (0.500000, 0.500000);
\node (P_3) at (0, 0.5) {$\bullet$};
      \node at (-0.05, 0.53) {$P_3$};

\coordinate (P_{6}) at (0.000000, 0.323968); \node at (P_{6}) {$\bullet$};
    \node at (-0.05, 0.310) {$P_{6}$};
\coordinate (P_{9}) at (0.500000, 0.413213); \node at (P_{9}) {$\bullet$};
     \node at (0.53, 0.38) {$P_{9}$};

\coordinate (P_{10}) at (0.061999, 0.500000); \node at (P_{10}) {$\bullet$};
\coordinate (P_{11}) at (0.000000, 0.397747); \node at (P_{11}) {$\bullet$};
\coordinate (P_{12}) at (0.168616, 0.352421); \node at (P_{12}) {$\bullet$};
 \node at (0.0620, 0.53) {$P_{10}$};
 \node at (-0.05, 0.3977) {$P_{11}$};
 \node at (0.1686, 0.38) {$P_{12}$};

\draw[fill=lightgray] (P_{9}) -- (P2) -- (P_{10}) -- (P_{9});
\draw[fill=lightgray] (P_{6}) -- (P_{11}) -- (P_{12}) -- (P_{6});

\node (P_c) at (0.0000, 0.3509) {$\circ$}; \node at (-0.05, 0.3509) {$P_c$};
 \node at (P2) {$\bullet$}; \node at (0.53, 0.53) {$P_2$};
 \node at (0.39, 0.46) {$\mathcal{R}_2$};
 \node at (0.07, 0.36) {$\mathcal{R}_3$};

\draw[domain=0:0.6] plot (\x,
{0+sqrt(((\x+(1))*(\x+(1))+0.9942)/19)}) node[above] {$B_{1, 0}^{+, -}$};
\draw[domain=0:0.6] plot (\x,
{0+sqrt(((\x+(1))*(\x+(1))-0.9942)/19)}) node[above] {$B_{1, 0}^{+, +}$};
\node (P_{6}) at (0.0000, 0.3240) {$\bullet$}; \node at (-0.05, 0.310) {$P_{6}$};
\node (P_{7}) at (0.5000, 0.2571) {$\bullet$}; \node at (0.45, 0.3) {$P_{7}$};

\draw[domain=0:0.6,very thick] plot (\x,
{0+sqrt(((\x+(-2))*(\x+(-2))+0.9942)/19)}) node[right] {$B_{-2, 0}^{+,
-}$};
\draw[domain=0:0.6,very thick] plot (\x,
{0+sqrt(((\x+(-2))*(\x+(-2))-0.9942)/19)}) node[below] {$B_{-2, 0}^{+,
+}$};

\draw[dashed] ( 0.5, 0) -- ( 0.5, 0.5);
\draw[dashed] (0, 0.5) -- (0.5, 0.5);
\end{tikzpicture}
\caption{The case $m=19$: end of step 2 (left), step 3 (right). \label{real19-step3}}
\end{figure}

\medskip
\noindent
{\em Step 3:} $P_3, P_7$ and $P_9$ can be covered by $(-2, 0)$, see
Figure~\ref{real19-step3}.
This creates new points
$$P_{10} = B_{-2, 0}^{+-} \cap \{y=1/2\}=\left(2-{\frac {\sqrt {48811}}{114}}\approx 0.0620, 1/2\right),$$
$$P_{11} = B_{-2, 0}^{++} \cap \{x=0\}=\left(0, {\frac {\sqrt {514}}{57}} \approx 0.3977\right),$$
$$P_{12} = B_{-2, 0}^{++} \cap B_{1, 0}^{+-}=\left({\frac{173}{1026}}\approx 0.1686,{\frac {\sqrt
{47198299}}{19494}}\approx 0.3524\right).$$
Proofs are:
$$B_{-2, 0}^{+-} \geq B_{1, 0}^{+-}\text{ for } a \in [173/1026, 1/2], \text{i.e.}, 
-2\,a+1 \geq 0;$$
$$B_{1, 0}^{+-} \geq B_{-2, 0}^{++}\text{ for } a \in [173/1026, 1/2], \text{i.e.}, 1026\,a-173 \geq 0.$$
We also need to prove the covering of $P_3P_{10}$:
$$B_{-2, 0}^{+-} \geq 1/2\text{ for } a \in [0, x(P_{10})], \text{i.e.}, 
684\,{a}^{2}-2736\,a+167 \geq 0: \text{case }a_1;$$ 
$$B_{-2, 0}^{++} \leq 1/2\text{ for } a \in [0, x(P_{10})], \text{i.e.}, 
684\,{a}^{2}-2736\,a-1193 \geq 0: \text{case b}.$$ 

We still have to cover $\mathcal{R}_2 =
\overline{P_9P_2P_{10}}\widehat{P_{10}P_9}$ and $\mathcal{R}_3 =
\overline{P_{11}P_cP_6}\widehat{P_6 P_{12}}$.

\subsubsection{Covering $\mathcal{R}_2$}

\noindent
{\em Step 4:} $(2, -1)$ covers $P_9$ and $P_{10}$, see
Figure~\ref{real19-step4}.

\begin{figure}[hbt]
\begin{tikzpicture}[>=latex,xscale=12,yscale=12]
\clip (-0.1, 0.32) rectangle (0.7, 0.6);
\draw[->] (0, 0) -- (0.7, 0); 
\draw[->] (0, 0) -- (0, 0.7); 
\draw[domain=0:0.6] plot (\x, {sqrt((\x*\x+0.994)/19)}) node[right]
{$B_{0, 0}^{+, -}$};
\node (P_4) at (0, 0.229) {$\bullet$}; \node at (-0.05, 0.2) {$P_4$};
\coordinate (P2) at (0.500000, 0.500000);
\node at (P2) {$\bullet$}; \node at (0.55, 0.55) {$P_2$};
\node (P_3) at (0, 0.5) {$\bullet$}; \node at (-0.05, 0.55) {$P_3$};

\coordinate (P_{13}) at (0.500000, 0.474050);
\coordinate (P_{14}) at (0.396696, 0.500000);

\draw[fill=lightgray] (P_{13}) -- (P2) -- (P_{14}) -- (P_{13});

\node at (P_{13}) {$\bullet$}; \node at (0.4600, 0.45) {$P_{13}$};
\node at (P_{14}) {$\bullet$}; \node at (0.3967, 0.55000) {$P_{14}$};

\draw[domain=0:0.6] plot (\x, {0+sqrt(((\x+(1))*(\x+(1))+0.9942)/19)})
node[right] {$B_{1, 0}^{+, -}$};
\node (P_{8}) at (0.4942, 0.2553) {$\bullet$}; \node at (0.45, 0.2) {$P_{8}$};
\node (P_{9}) at (0.5000, 0.4132) {$\bullet$}; \node at (0.52, 0.44) {$P_{9}$};

\draw[domain=0:0.6] plot (\x,
{0+sqrt(((\x+(-2))*(\x+(-2))+0.9942)/19)}) node[right] {$B_{-2, 0}^{+,
-}$};
\draw[domain=0:0.6] plot (\x,
{0+sqrt(((\x+(-2))*(\x+(-2))-0.9942)/19)}) node[right] {$B_{-2, 0}^{+,
+}$};
\node (P_{10}) at (0.0620, 0.5000) {$\bullet$}; \node at (0.0620, 0.5500) {$P_{10}$};

\draw[domain=0:0.6,very thick] plot (\x,
{1-sqrt(((\x+(2))*(\x+(2))+0.9942)/19)}) node[right] {$B_{2, -1}^{-, -}$};
\draw[domain=0:0.6,very thick] plot (\x,
{1-sqrt(((\x+(2))*(\x+(2))-0.9942)/19)}) node[above] {$B_{2, -1}^{-, +}$};

\draw[dashed] ( 0.5, 0) -- ( 0.5, 0.5);
\draw[dashed] (0, 0.5) -- (0.5, 0.5);
\end{tikzpicture}
\caption{The case $m=19$: step 4. \label{real19-step4}}
\end{figure}
We have to prove
$$B_{2, -1}^{-+} \geq B_{-2, 0}^{+-}\text{ for } a \in [0, 1/2], \text{i.e.}, 
-350892\,{a}^{2}-930240\,a+1782337 \geq 0: \text{case b};$$ 
$$B_{-2, 0}^{+-} \geq B_{2, -1}^{--}\text{ for } a \in [0, 1/2], \text{i.e.}, 
108\,{a}^{2}+167 \geq 0: \text{case -}.$$ 
For the covering of $P_{10}P_{14}$, we get
$$B_{2, -1}^{-+} \geq 1/2\text{ for } a \in [x(P_{10}), x(P_{14})], \text{i.e.}, 
-684\,{a}^{2}-2736\,a+1193 \geq 0: \text{case b};$$ 
$$B_{2, -1}^{--} \leq 1/2\text{ for } a \in [x(P_{10}), x(P_{14})], \text{i.e.}, 
-684\,{a}^{2}-2736\,a-167 \geq 0: \text{case }a_2.$$ 
We find two new points
$$P_{13} = B_{2, -1}^{-+} \cap \{x=1/2\}= \left(1/2,1-{\frac {\sqrt {3595}}{114}}\approx 0.4740\right),$$
$$P_{14} = B_{2, -1}^{-+} \cap \{y=1/2\}=\left(-2+{\frac {\sqrt {74651}}{114}}\approx 0.3967,1/2\right).$$

\medskip
\noindent
{\em Step 5:} we have to cover $P_{13}P_2P_{14}$, which can be done
using $(-7, 1)$, see Figure~\ref{real19-step6}.
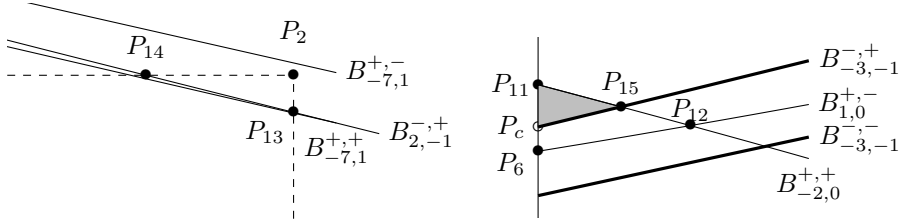
\begin{figure}[hbt]
\begin{tikzpicture}[>=latex,xscale=19,yscale=19]
\clip (0.3, 0.4) rectangle (0.63, 0.55);
\draw[->] (0, 0) -- (0.7, 0) node[very near end,above] {$a$};
\draw[->] (0, 0) -- (0, 0.7) node[very near end,left] {$b$};

\node (P_2) at (0.5, 0.5) {$\bullet$}; \node at (0.5, 0.53) {$P_2$};

\draw[domain=0:0.56] plot (\x, {1-sqrt(((\x+(2))*(\x+(2))-0.9942)/19)})
node[right] {$B_{2, -1}^{-, +}$};
\node (P_{13}) at (0.5000, 0.4740) {$\bullet$};
         \node at (0.4800, 0.46) {$P_{13}$};
\node (P_{14}) at (0.3967, 0.5000) {$\bullet$};
         \node at (0.3967, 0.52000) {$P_{14}$};

\draw[domain=0:0.53] plot (\x,
{-1+sqrt(((\x+(-7))*(\x+(-7))+0.9942)/19)}) node[right] {$B_{-7, 1}^{+, -}$};
\draw[domain=0:0.53] plot (\x,
{-1+sqrt(((\x+(-7))*(\x+(-7))-0.9942)/19)}) node[below] {$B_{-7, 1}^{+, +}$};

\draw[dashed] ( 0.5, 0) -- ( 0.5, 0.5);
\draw[dashed] (0, 0.5) -- (0.5, 0.5);
\end{tikzpicture}
\begin{tikzpicture}[>=latex,xscale=12,yscale=12]
\clip (-0.055, 0.25) rectangle (0.45, 0.45);
\draw[->] (0, 0) -- (0, 0.7) node[very near end,left] {$b$};

\coordinate (P_c) at (0.0000, 0.3509); \node at (P_c) {$\circ$};
\coordinate (P_{11}) at (0.000000, 0.397747); \node at (P_{11}) {$\bullet$};
\coordinate (P_{15}) at (0.091809, 0.373253); \node at (P_{15}) {$\bullet$};

\draw[fill=lightgray] (P_{11}) -- (P_c) -- (P_{15}) -- (P_{11});

      \node at (-0.03, 0.3509) {$P_c$};

\draw[domain=0:0.3] plot (\x, {0+sqrt(((\x+(1))*(\x+(1))+0.9942)/19)})
node[right] {$B_{1, 0}^{+, -}$};
\node (P_{6}) at (0.0000, 0.3240) {$\bullet$};
        \node at (-0.03, 0.310) {$P_{6}$};
\draw[domain=0:0.3] plot (\x,
{0+sqrt(((\x+(-2))*(\x+(-2))-0.9942)/19)}) node[below] {$B_{-2, 0}^{+, +}$};

    \node at (-0.03, 0.3977) {$P_{11}$};
\node (P_{12}) at (0.1686, 0.3524) {$\bullet$};
         \node at (0.1686, 0.37) {$P_{12}$};

\draw[domain=0:0.3,very thick] plot (\x,
{1-sqrt(((\x+(-3))*(\x+(-3))+0.9942)/19)}) node[right] {$B_{-3,
-1}^{-, -}$};
\draw[domain=0:0.3,very thick] plot (\x,
{1-sqrt(((\x+(-3))*(\x+(-3))-0.9942)/19)}) node[right] {$B_{-3,
-1}^{-, +}$};
         \node at (0.0918, 0.4) {$P_{15}$};
\end{tikzpicture}
\caption{The case $m=19$: step 5 (left), step 6 (right). \label{real19-step6}}
\end{figure}
We need to prove that $P_2P_{14}$ is covered:
$$B_{-7, 1}^{+-} \geq 1/2\text{ for } a \in [x(P_{14}), 1/2], \text{i.e.}, 
684\,{a}^{2}-9576\,a+4955 \geq 0: \text{case }a_1;$$ 
$$B_{-7, 1}^{++} \leq 1/2\text{ for } a \in [x(P_{14}]), 1/2], \text{i.e.}, 
684\,{a}^{2}-9576\,a+3595 \geq 0: \text{case b}.$$ 
Also:
$$B_{-7, 1}^{+-} \geq B_{2, -1}^{-+}\text{ for } a \in [0, 1/2], \text{i.e.}, 
-116964\,{a}^{2}+1003428\,a+421651 \geq 0: \text{case b};$$ 
$$B_{2, -1}^{-+} \geq B_{-7, 1}^{++}\text{ for } a \in [0, 1/2], \text{i.e.}, 
36\,{a}^{2}-180\,a+85 \geq 0: \text{case }a_1.$$ 

\subsubsection{Covering $\mathcal{R}_3$}

\noindent
{\em Step 6:} We select $(-3, -1)$ and obtain Figure~\ref{real19-step6}.
And this creates
$$P_{15} = B_{-3, -1}^{-+} \cap B_{-2, 0}^{++}= \left(5/2-1/18\,\sqrt
{1879}\approx 0.0918,1/2-{\frac
{\sqrt {1879}}{342} \approx 0.3733}\right).$$
We need to prove
$$B_{-3, -1}^{-+} \geq B_{1, 0}^{+-}\text{ for } a \in [0, 1/2], \text{i.e.}, 
-350892\,{a}^{2}+1632024\,a+501205 \geq 0: \text{case b};$$ 
$$B_{1, 0}^{+-} \geq B_{-3, -1}^{--}\text{ for } a \in [0, 1/2], \text{i.e.}, 
108\,{a}^{2}-216\,a+275 \geq 0: \text{case -}.$$ 
This leaves us with $\widehat{P_{11}P_cP_{15}}$.

\medskip
\noindent
{\em Step 7:} in order to cover $P_{15}$,
we use $(7, -2)$ to cover $\widehat{P_{11}P_{15}}$, see
Figure~\ref{real19-step8}. This creates
$$P_{16} = B_{7, -2}^{--} \cap \{x=0\}=\left(0,2-{\frac {\sqrt {8549}}{57}} \approx 0.3779\right),$$
$$P_{17} = B_{7, -2}^{--} \cap B_{-3, -1}^{-+}=\left(
-{\frac{29402}{13851}}+{\frac{\sqrt{3647350219}}{27702}}\approx 0.0574,\right.$$
$$\left.{\frac{41893}{27702}}-{\frac {5\,\sqrt{3647350219}}{263169}}\approx 0.3648\right).$$
We need to prove
$$B_{7, -2}^{-+} \geq B_{-2, 0}^{++}\text{ for } a \in [0, 2/5], \text{i.e.}, 
36\,{a}^{2}+180\,a+85 \geq 0: \text{case }a_2;$$ 
$$B_{-2, 0}^{++} \geq B_{7, -2}^{--}\text{ for } a \in [0, 2/5], \text{i.e.}, 
-116964\,{a}^{2}-1003428\,a+421651 \geq 0: \text{case b}.$$ 

\begin{figure}[hbt]
\begin{tikzpicture}[>=latex,xscale=20,yscale=20]
\clip (-0.04, 0.3) rectangle (0.27, 0.45);
\draw[->] (0, 0) -- (0, 0.7) node[very near end,left] {$b$};

\coordinate (P_c) at (0.0000, 0.3509); \node at (P_c) {$\circ$};
\coordinate (P_{16}) at (0.000000, 0.377881); \node at (P_{16}) {$\bullet$};
\coordinate (P_{17}) at (0.057371, 0.364849); \node at (P_{17}) {$\bullet$};

\draw[fill=lightgray] (P_{17}) -- (P_c) -- (P_{16}) -- (P_{17});

      \node at (-0.02, 0.3509) {$P_c$};

\draw[domain=0:0.2] plot (\x,
{0+sqrt(((\x+(-2))*(\x+(-2))-0.9942)/19)}) node[right] {$B_{-2, 0}^{+, +}$};
\node (P_{11}) at (0.0000, 0.3977) {$\bullet$};
         \node at (-0.02, 0.3977) {$P_{11}$};

\draw[domain=0:0.2] plot (\x,
{1-sqrt(((\x+(-3))*(\x+(-3))-0.9942)/19)}) node[right] {$B_{-3,
-1}^{-, +}$};
\node (P_{15}) at (0.0918, 0.3733) {$\bullet$}; \node at (0.0918, 0.4) {$P_{15}$};

\draw[domain=0:0.2,very thick] plot (\x, {2+sqrt(((\x+(7))*(\x+(7))+0.9942)/19)}) node[right] {$B_{7, -2}^{+, -}$};
\draw[domain=0:0.2,very thick] plot (\x, {2-sqrt(((\x+(7))*(\x+(7))+0.9942)/19)}) node[below] {$B_{7, -2}^{-, -}$};
\draw[domain=0:0.2,very thick] plot (\x, {2-sqrt(((\x+(7))*(\x+(7))-0.9942)/19)}) node[right] {$B_{7, -2}^{-, +}$};
\node at (-0.02, 0.3779) {$P_{16}$};
\node at (0.0574, 0.35) {$P_{17}$};
\end{tikzpicture}
\begin{tikzpicture}[>=latex,xscale=60,yscale=60]
\clip (-0.015, 0.335) rectangle (0.09, 0.4);
\draw[->] (0, 0.32) -- (0, 0.4); 

\coordinate (P_c) at (0.0000, 0.3509); \node at (P_c) {$\circ$};
\coordinate (P_{18}) at (0.000000, 0.357355); \node at (P_{18}) {$\bullet$};
\coordinate (P_{19}) at (0.013610, 0.354189); \node at (P_{19}) {$\bullet$};

\draw[fill=lightgray] (P_{18}) -- (P_c) -- (P_{19}) -- (P_{18});

      \node at (-0.0075, 0.3509) {$P_c$};

\draw[domain=0:0.07] plot (\x,
{1-sqrt(((\x+(-3))*(\x+(-3))-0.9942)/19)}) node[right] {$B_{-3,
-1}^{-, +}$};


\draw[domain=0:0.07] plot (\x, {2-sqrt(((\x+(7))*(\x+(7))+0.9942)/19)}) node[below] {$B_{7, -2}^{-, -}$};
\node (P_{16}) at (0.0000, 0.3779) {$\bullet$};
         \node at (-0.0075, 0.3779) {$P_{16}$};
\node (P_{17}) at (0.0574, 0.3648) {$\bullet$}; \node at (0.0574, 0.36) {$P_{17}$};

\draw[domain=0:0.07,very thick] plot (\x, {-1+sqrt(((\x+(-6))*(\x+(-6))+0.9942)/19)}) node[right] {$B_{-6, 1}^{+, -}$};
\draw[domain=0:0.07,very thick] plot (\x, {-1+sqrt(((\x+(-6))*(\x+(-6))-0.9942)/19)}) node[right] {$B_{-6, 1}^{+, +}$};
\draw[domain=0:0.07] plot (\x,
{-1-sqrt(((\x+(-6))*(\x+(-6))-0.9942)/19)}) node[right] {$B_{-6, 1}^{-, +}$};
\node at (-0.0075, 0.359) {$P_{18}$};
\node at (0.0136, 0.347) {$P_{19}$};

\end{tikzpicture}
\caption{The case $m=19$: step 7 (left), step 8 (right). \label{real19-step8}}
\end{figure}
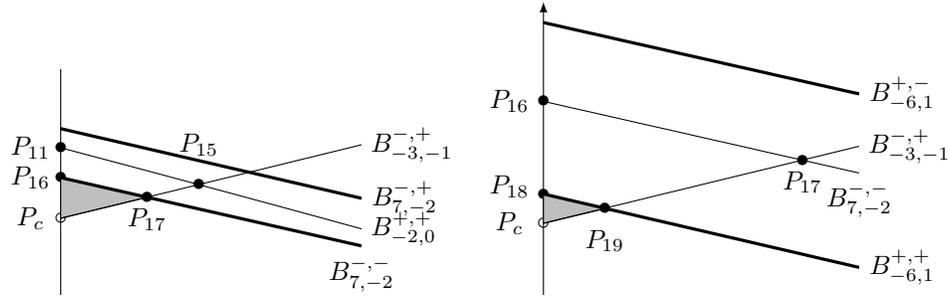

\medskip
\noindent
{\em Step 8:} we still need to cover $\widehat{P_cP_{17}P_{16}}$. Due
to the weak covering of $P_c$, we have to start a tedious process to
perform the covering of the region. At each step, some smaller region
will remain to be covered. This process has begun in Step 7 already.
We choose $(-6, 1)$ to cover at least $P_{16}$ and $P_{17}$, see
Figure~\ref{real19-step8}, which creates two points
$$P_{18}=B_{-6, 1}^{++} \cap \{x=0\}=\left(0,-1+{\frac {\sqrt {5986}}{57}} \approx 0.3574\right),$$
$$P_{19}=B_{-6, 1}^{++} \cap B_{-3, -1}^{-+}=\left(9/2-{\frac {\sqrt {813179}}{201}}\approx 0.0136,{\frac {\sqrt
{813179}}{2546}}\approx 0.3542\right).$$
We have to prove
$$B_{-6, 1}^{+-} \geq B_{7, -2}^{--}\text{ for } a \in [0, 2/5], \text{i.e.}, 
2\,{a}^{2}+2\,a+85 \geq 0: \text{case -};$$ 
$$B_{7, -2}^{--} \geq B_{-6, 1}^{++}\text{ for } a \in [0, 2/5], \text{i.e.}, 
-58482\,{a}^{2}+697338\,a+2892295 \geq 0: \text{case b}.$$ 

\medskip
\noindent
{\em Step 9:}
We compute
\begin{verbatim}
C[18]={[-321, -74], [-19, 4], [-6, 1], [6, 1], [19, 4],
   [321, -74]}
C[19]={[-430, -99], [-19, 4], [-6, 1], [-3, -1], [321, -74],
   [991, 227]}
\end{verbatim}
which shows that $P_c$ and $P_{19}$ may be covered by $(991, 227)$,
and also $P_{18}$ and $P_{19}$ by any of $\{(-19, 4), (321, -74)\}$.
We choose the large $(991, 227)$ and the smaller $(-19, 4)$ and come
up with Figure~\ref{real19-step9}.
\begin{figure}[hbt]
\begin{tikzpicture}[>=latex,xscale=400,yscale=1000]
\draw[->] (0, 0.351) -- (0, 0.358); 

\coordinate (P22) at (0.000000, 0.352893);
            \node at (0.000000, 0.352893) {$\bullet$};
\coordinate (P20) at (0.000000, 0.351107);
            \node at (0.000000, 0.351107) {$\bullet$};
\coordinate (P21) at (0.003889, 0.352000);
            \node at (0.003889, 0.352000) {$\bullet$};

\draw[fill=lightgray] (0.000000, 0.352893) -- (0.000000, 0.351107) --
(0.003889, 0.352000) -- (0.000000, 0.352893);

\node (P_c) at (0.0000, 0.3509) {$\circ$}; \node at (-0.001, 0.351) {$P_c$};

\draw[domain=0:0.02] plot (\x,
{1-sqrt(((\x+(-3))*(\x+(-3))-0.9942)/19)}) node[right] {$\quad B_{-3,
-1}^{-, +}$};

\draw[domain=0:0.02] plot (\x, {-1+sqrt(((\x+(-6))*(\x+(-6))-0.9942)/19)}) node[right] {$B_{-6, 1}^{+, +}$};
\node (P_{18}) at (0.0000, 0.3574) {$\bullet$}; \node at (-0.001, 0.3575) {$P_{18}$};
\node (P_{19}) at (0.0136, 0.3542) {$\bullet$}; \node at (0.0136, 0.355) {$P_{19}$};

\draw[domain=0:0.02,very thick] plot (\x, {0.3511074+0.2294156179*\x}) node[above] {$B_{991, 227}^{+, -}$};
\draw[domain=0:0.02,very thick] plot (\x, {0.3508771930 +
0.2294158500* \x)}) node[below] {$\quad B_{991, 227}^{+, +}$};
\node at (-0.00100, 0.3515) {$P_{20}$};

\draw[domain=0:0.01,very thick] plot (\x,
{-4+sqrt(((\x+(-19))*(\x+(-19))-0.9942)/19)}) node[below] {$B_{-19,
4}^{+, +}$};
\node at (-0.001, 0.3529) {$P_{22}$};
\node at (0.0039, 0.3524) {$P_{21}$};
\end{tikzpicture}
\caption{The case $m=19$: step 9. \label{real19-step9}}
\end{figure}
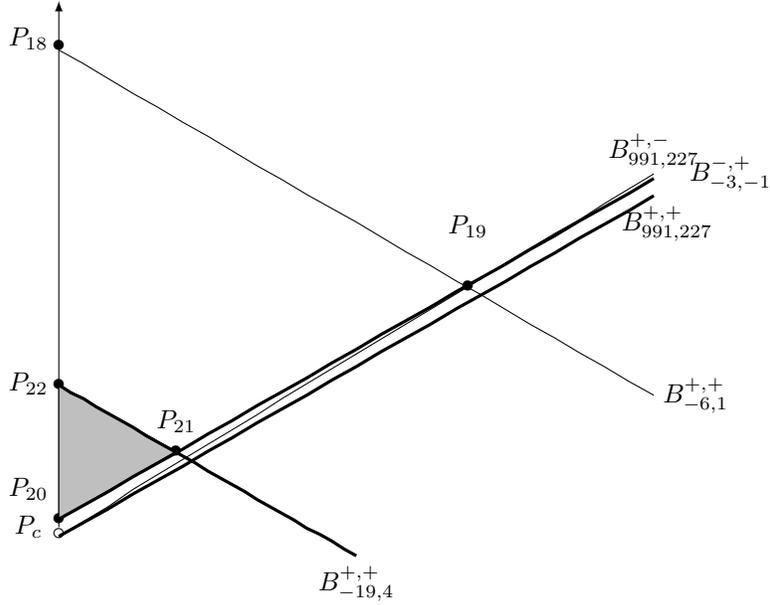
We get three new points
$$P_{20}=B_{991, 227}^{+-} \cap \{x=0\}=
\left(0,-227+{\frac {11\,\sqrt {1387901}}{57}}\approx 0.3511\right),$$
with the first part and
$$P_{21}=B_{-19, 4}^{++} \cap B_{991, 227}^{+-}=
\left(-{\frac{6253815094}{12867579}}+{\frac {\sqrt
{156443316403988655691}}{25735158}}\approx 0.0039\right.,$$
$$\quad\quad \left.-{\frac{2972486569}{25735158}}+{\frac
{505\,\sqrt {3145917199299979}}{244484001}}\approx 0.3520\right),$$
$$P_{22}=B_{-19, 4}^{++} \cap \{x=0\}=
\left(0,-4+{\frac {\sqrt {61561}}{57}}\approx 0.3529\right)$$
in the second.
First, we have to prove
$$B_{991, 227}^{+-} \geq B_{-3, -1}^{-+}\text{ for } a \in [0, x(P_{19})], \text{i.e.}, $$
$$-29241\,{a}^{2}-29060082\,a+480767 \geq 0: \text{case b};$$ 
$$B_{-3, -1}^{-+} \geq B_{991, 227}^{++}\text{ for } a \in [0, x(P_{19})], \text{i.e.}, 
{a}^{2}+988\,a \geq 0: \text{case }a_2.$$ 
Second, we need
$$B_{-19, 4}^{+-} \geq B_{-6, 1}^{++}\text{ for } a \in [0, 2/5], \text{i.e.}, 
-58482\,{a}^{2}+706230\,a+2838943 \geq 0: \text{case b};$$ 
$$B_{-6, 1}^{++} \geq B_{-19, 4}^{++}\text{ for } a \in [0, 2/5], \text{i.e.}, 
2\,{a}^{2}-50\,a+57 \geq 0: \text{case }a_1.$$ 

\medskip
\noindent
{\em Step 10:}
we are left with covering
$\overline{P_{22}P_{20}}\widehat{P_{20}P_{21}P_{22}}$.
We cover the pair $(P_{21}, P_{22})$ by $(-80, 18)$, then
$(P_{20}, P_{24})$ by $(-430, -99)$ to get Figure~\ref{real19-step11}.

\begin{figure}[hbt]
\begin{tikzpicture}[>=latex,xscale=400,yscale=1200]
\clip (-0.0025, 0.35) rectangle (0.014, 0.354);
\draw[->] (0, 0.35) -- (0, 0.354); 

\draw[domain=0:0.01] plot (\x, {0.3511074+0.2294156179*\x})
node[above] {$\quad\quad\quad\quad B_{991, 227}^{+, -}$};
\node (P_{20}) at (0.0000, 0.3511) {$\bullet$}; \node at (-0.00100, 0.351) {$P_{20}$};

\draw[domain=0:0.005] plot (\x,
{-4+sqrt(((\x+(-19))*(\x+(-19))-0.9942)/19)}) node[right] {$B_{-19,
4}^{+, +}$};
\node (P_{22}) at (0.0000, 0.3529) {$\bullet$}; \node at (-0.001, 0.3529) {$P_{22}$};
\node (P_{21}) at (0.0039, 0.3520) {$\bullet$}; \node at (0.0039, 0.3515) {$P_{21}$};
\draw[domain=0:0.01,very thick] plot (\x, {-18+sqrt(((\x+(-80))*(\x+(-80))+0.9942)/19)}) node[right] {$B_{-80, 18}^{+, -}$};
\draw[domain=0:0.005,very thick] plot (\x, {-18+sqrt(((\x+(-80))*(\x+(-80))-0.9942)/19)}) node[right] {$B_{-80, 18}^{+, +}$};

\draw[domain=0:0.01,very thick] plot (\x, {0.35096922+0.2294151172*\x}) node[right] {$B_{-430, -99}^{-, -}$};
\draw[domain=0:0.01,very thick] plot (\x, {0.35149963+0.2294163506*\x}) node[left] {$B_{-430, -99}^{-, +}\quad$};

\node (P_{23}) at (0.0000, 0.3518) {$\bullet$}; \node at (-0.001, 0.3518) {$P_{23}$};
\coordinate (P_{24}) at (0.001582, 0.351470);
       \node at (0.001582, 0.351470) {$\bullet$}; 
       \node at (0.001582, 0.351) {$P_{24}$};
\node (P_{26}) at (0.0000, 0.3515) {$\bullet$}; \node at (-0.001, 0.3515) {$P_{26}$};
\node (P_{25}) at (0.0007, 0.3517) {$\bullet$}; \node at (0.0007, 0.352) {$P_{25}$};

\end{tikzpicture}
\begin{tikzpicture}[>=latex,xscale=500,yscale=1500]
\clip (-0.003, 0.35) rectangle (0.0075, 0.354);
\draw[->] (0, 0.35) -- (0, 0.354); 

\draw[domain=0:0.004] plot (\x, {-18+sqrt(((\x+(-80))*(\x+(-80))-0.9942)/19)}) node[right] {$B_{-80, 18}^{+, +}$};

\draw[domain=0:0.004] plot (\x, {0.35149963+0.2294163506*\x}) node[right] {$B_{-430, -99}^{-, +}\quad$};

\node (P_{23}) at (0.0000, 0.3518) {$\bullet$}; \node at (-0.001, 0.3518) {$P_{23}$};
\node (P_{26}) at (0.0000, 0.3515) {$\bullet$}; \node at (-0.001, 0.3515) {$P_{26}$};
\node (P_{25}) at (0.0007, 0.3517) {$\bullet$}; \node at (0.0007, 0.352) {$P_{25}$};

\draw[domain=0:0.004,very thick] plot (\x,
{0.35131691-0.2294016565*\x}) node[right] {$B_{90, -21}^{-, -}$};
\draw[domain=0:0.004,very thick] plot (\x, {0.35385107-0.2294298138*\x})
node[right] {$B_{90, -21}^{-, +}$};

\end{tikzpicture}
\caption{The case $m=19$: step 10 (left), step 11
(right). \label{real19-step11}}
\end{figure}
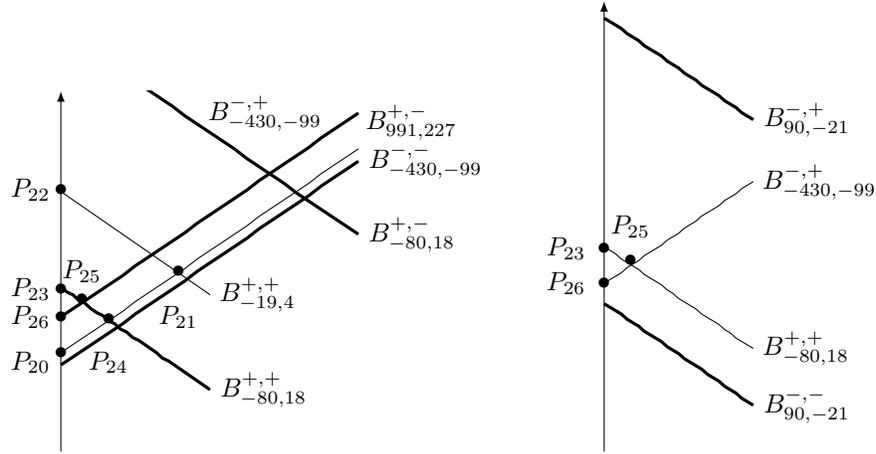
We get four new points
$$P_{23} = B_{-80, 18}^{++} \cap \{x=0\}=
\left(0, -18+{\frac {\sqrt {1094230}}{57}}\approx 0.3518\right),$$
$$P_{24}=B_{-80, 18}^{++} \cap B_{991,
227}^{+-}=\left(-{\frac{2744379489}{6024938}}+{\frac {11\,\sqrt
{13966378024053079}}{2853918}}\approx 0.001582,\right.$$
$$\left.-{\frac{349606825}{2853918}}+{\frac {119\,\sqrt {13966378024053079}}{114473822}}\approx 0.351470\right),$$
$$P_{25} = B_{-430, -99}^{-+} \cap B_{-80, 18}^{++}=
\left(255-{\frac {39\,\sqrt
{3237343002931}}{275182}}\approx 0.0007,\right.$$
$$\left.{\frac{81}{2}}-{\frac {175\,\sqrt
{3237343002931}}{7842687}}\approx 0.3517\right),$$
$$P_{26} = B_{-430, -99}^{-+} \cap \{x=0\}=
\left(0, 99-{\frac {\sqrt {31617730}}{57}}\approx 0.3515\right).$$
First, we need to prove
$$B_{-80, 18}^{+-} \geq B_{-19, 4}^{++}\text{ for } a \in [0, 2/5], \text{i.e.}, $$
$$-350892\,{a}^{2}+20552148\,a+169237849 \geq 0: \text{case b};$$ 
$$B_{-19, 4}^{++} \geq B_{-80, 18}^{++}\text{ for } a \in [0, 2/5], \text{i.e.}, 
108\,{a}^{2}-10692\,a+30799 \geq 0: \text{case }a_1.$$ 
Second:
$$B_{-430, -99}^{-+} \geq B_{991, 227}^{+-}\text{ for } a \in [0, 2/5], \text{i.e.}, $$
$$-350892\,{a}^{2}+133617348\,a+242222193409 \geq 0: \text{case b};$$ 
$$B_{991, 227}^{+-} \geq B_{-430, -99}^{--}\text{ for } a \in [0, 2/5], \text{i.e.}, 
108\,{a}^{2}+60588\,a+26245559 \geq 0: \text{case -}.$$ 

\medskip
\noindent
{\em Step 11:} the last (!) step is to cover the remaining three
points $P_{23}$, $P_{26}$, $P_{25}$. It turns out that
\begin{verbatim}
C[23]={[-90, -21], [-80, 18], [80, 18], [90, -21]}
C[24]={[-90, -21], [90, -21]}
C[25]={[-90, -21], [-80, 18], [90, -21]}
Cinter={[-90, -21], [90, -21]}
\end{verbatim}
and so we can use $(90, -21)$ for this task. We get
Figure~\ref{real19-step11}. Our last proofs for this case are
$$B_{90, -21}^{-+} \geq B_{-80, 18}^{++}\text{ for } a \in [0, 2/5], \text{i.e.}, 
4\,{a}^{2}+40\,a+86121 \geq 0: \text{case -};$$ 
$$B_{-80, 18}^{++} \geq B_{90, -21}^{--}\text{ for } a \in [0, 2/5], \text{i.e.}, $$
$$-116964\,{a}^{2}-40704840\,a+644319959 \geq 0: \text{case b}.$$ 
Finally
$$B_{90, -21}^{-+} \geq B_{-430, -99}^{-+}\text{ for } a \in [0, x(P_{25})], \text{i.e.}, 
229\,{a}^{2}-77860\,a+399 \geq 0: \text{case }a_1;$$ 
$$B_{-430, -99}^{-+} \geq B_{90, -21}^{--}\text{ for } a \in [0, 2/5], \text{i.e.}, $$
$$-1131655941\,{a}^{2}+384755461740\,a+153230834 \geq 0: \text{case b}.$$ 

\section{Computing the euclidean division}

We are given a number $\xi = a+b \omega$ with $0 \leq a, b \leq 1/2$
(coming from a centered division on two quadratic numbers) and we
want to find two rational integers $x$ and $y$ such $\gamma = x + y
\omega$ for which $|\Norm(\xi-\gamma)| \leq M$.
There are two possible algorithms. The first one is to find a
covering region for $(a, b)$ in our collection of regions. From an
algorithmic point of view, this is a bit cumbersome. The second
algorithm considers in sequence all possible coverings $(u, v)$
from our list, and stops as soon as
$|f_m(a+u, b+v)| \leq M$. (There might several possible
pairs $(u, v)$, one is enough for our needs, unless we insist on having
the smallest possible value for the norm.)

\section{Future work}

The first task is to extend this work to the case $m \equiv 1\bmod 4$,
a task which already began, highlighting easy cases ($5$, $17$, $21$,
$57$) but also harder cases like $13$ and $29$, see the forthcoming
\cite{Morain2026c}. We can try to use our approach to the case of
$M_2$-division algorithms where $M_2$ is the second Euclidean minimum,
and why not the cases of other $M_i$'s that are known in certain
cases. This could be much harder. In a more general context, there is
room for $M_1$-algorithms in larger degree, with possibly more
difficult work on the known cases.

\iffalse
\bibliographystyle{plain}
\bibliography{morain,euclidean}

\begin{thebibliography}{10}

\bibitem{BaSw1952b}
E.~S. Barnes and H.~P.~F. Swinnerton-Dyer.
\newblock The inhomogeneous minima of binary quadratic forms. {II}.
\newblock {\em Acta Math.}, 88:279--316, 1952.

\bibitem{Berg1935}
E.~Berg.
\newblock {\"U}ber die {E}xistenz eines {E}uklidischen {A}lgorithmus in
  quadratischen {Z}ahlk{\"o}rpern.
\newblock {\em Kungl. Fysiografiska sällskapets i Lund förhandlingar},
  5(5):53--58, 1935.

\bibitem{CaSc2010}
Perlas~C. Caranay and Renate Scheidler.
\newblock An efficient seventh power residue symbol algorithm.
\newblock {\em Int. J. Number Theory}, 6(8):1831--1853, 2010.

\bibitem{Cerri2006}
Jean-Paul Cerri.
\newblock Inhomogeneous and {E}uclidean spectra of number fields with unit rank
  strictly greater than 1.
\newblock {\em J. Reine Angew. Math.}, 592:49--62, 2006.

\bibitem{Cerri2007}
Jean-Paul Cerri.
\newblock Euclidean minima of totally real number fields: algorithmic
  determination.
\newblock {\em Math. Comp.}, 76(259):1547--1575, 2007.

\bibitem{Dirichlet1893}
P.~G.~Lejeune Dirichlet.
\newblock {\em Vorlesungen {\"u}ber {Z}ahlentheorie}.
\newblock Vieweg, Braunschweig, 1893.
\newblock ed. R. Dedekind.

\bibitem{JoLaNgNa2021}
Marc Joye, Oleksandra Lapiha, Ky~Nguyen, and David Naccache.
\newblock The eleventh power residue symbol.
\newblock {\em J. Math. Cryptol.}, 15(1):111--122, 2021.

\bibitem{Lemmermeyer1995}
Franz Lemmermeyer.
\newblock The {E}uclidean algorithm in algebraic number fields.
\newblock {\em Exposition. Math.}, 13(5):385--416, 1995.
\newblock Updated version, 2004.

\bibitem{Lezowski2014}
Pierre Lezowski.
\newblock Computation of the {E}uclidean minimum of algebraic number fields.
\newblock {\em Math. Comp.}, 83(287):1397--1426, 2014.

\bibitem{Morain2026a}
F.~Morain.
\newblock Division algorithms for {E}uclidean imaginary quadratic fields.
\newblock Preprint, January 2026.

\bibitem{Morain2026c}
F.~Morain.
\newblock Division algorithms for norm-{E}uclidean real quadratic fields --
  part {II}.
\newblock In preparation, January 2026.

\bibitem{Oppenheim1934}
Alexander Oppenheim.
\newblock Quadratic fields with and without {E}uclid's algorithm.
\newblock {\em Math. Ann.}, 109(1):349--352, 1934.

\bibitem{Perron1933}
Oskar Perron.
\newblock Quadratische {Z}ahlk\"orper mit {E}uklidischem {A}lgorithmus.
\newblock {\em Math. Ann.}, 107(1):489--495, 1933.

\bibitem{Remak1934}
Robert Remak.
\newblock Über den {E}uklidischen {A}lgorithmus in reell-quadratischen
  {Z}ahlkörpern.
\newblock {\em Jahresbericht der Deutschen Mathematiker-Vereinigung},
  44:238--250, 1934.

\bibitem{ScWi1995}
Renate Scheidler and Hugh~C. Williams.
\newblock A public-key cryptosystem utilizing cyclotomic fields.
\newblock {\em Des. Codes Cryptogr.}, 6(2):117--131, 1995.

\bibitem{Varnavides1948ab}
P.~Varnavides.
\newblock Non-homogeneous binary quadratic forms. {I}, {II}.
\newblock {\em Nederl. Akad. Wetensch., Proc.}, 51:396--404, 470--481 =
  Indagationes Math. 10, 142--150, 164--175, 1948.
\newblock case Q(sqrt(2)).

\bibitem{Varnavides1949}
P.~Varnavides.
\newblock On the quadratic form {$x^2-7y^2$}.
\newblock {\em Proc. Roy. Soc. London Ser. A}, 197:256--268, 1949.

\bibitem{Varnavides1952}
P.~Varnavides.
\newblock The {M}inkowski constant of the form {$x^2-11y^2$}.
\newblock {\em Bull. Soc. Math. Gr\`ece}, 26:14--23, 1952.

\end{thebibliography}
\else
\def\noopsort#1{}\ifx\bibfrench\undefined\def\biling#1#2{#1}\else\def\biling#1#2{#2}\fi\def\Inpreparation{\biling{In
  preparation}{en
  pr{\'e}paration}}\def\Preprint{\biling{Preprint}{pr{\'e}version}}\def\Draft{\biling{Draft}{Manuscrit}}\def\Toappear{\biling{To
  appear}{\`A para\^\i tre}}\def\Inpress{\biling{In press}{Sous
  presse}}\def\Seealso{\biling{See also}{Voir
  {\'e}galement}}\def\Editor{\biling{Ed.}{R{\'e}d.}}

\fi

\end{document}